\newtheorem{th1}{Theorem}[section]
\newtheorem{lem}[th1]{ Lemma}
\author[H.Hani and M. Khenissi]{Houda Hani and Moez Khenissi}
\title[\textbf{Asymptotic behaviours}]{Asymptotic behaviours of solutions for finite difference analogue of the Chipot-Weissler equation}
\begin{document}
	\frontmatter
	\begin{abstract}
		This paper deals with nonlinear parabolic equation for which a local solution in time exists and then blows up in a finite time. We consider the Chipot-Weissler equation:
		\begin{equation*}
		u_{t}=u_{xx}+u^{p}-\left|u_{x}\right|^{q},\ \ x\in (-1,1);\ t>0, \ \ p>1 \text{ and } 1\leq q\leq \dfrac{2p}{p+1}.
		\end{equation*}
		We study the numerical approximation, we show that the numerical solution converges to the continuous one under some restriction on the initial data and the parameters $p$ and $q$. Moreover, we study the numerical blow up sets and we show that although the convergence of the numerical solution is guaranteed, the numerical blow up sets are sometimes different from that of the PDE. 
	\end{abstract}
	\keywords{Chipot-Weissler equation, blow up, finite difference scheme, numerical blow up set, asymptotic behaviours, numerical convergence.}
	\frontmatter
	\maketitle
	\mainmatter\date{}
	\tableofcontents
	\section{Introduction}
	In this paper, we consider the nonlinear parabolic problem
	\begin{equation}
	\left\{
	\begin{array}{lll}
	u_{t}=u_{xx}+u^{p}-\left|u_{x}\right|^{q},\ \ \ x\in (-1,1),\ t>0,\\
	u(\pm 1,t)=0,\ \ t>0,\\
	u(x,0)=u_{0}(x),\ \ x\in(-1,1).	
	\end{array}
	\right.
	\label{exacte}
	\end{equation}
	Here $p>1,\ 1\leq q\leq \dfrac{2p}{p+1}$ and $u_{0}$ is a positive function which is compatible with the boundary condition. It is well known that for some initial data, this problem blows up in a finite time. Problem \eqref{exacte} was studied for the first time by Chipot and Weissler in \cite{chipotweissler}, since then, the phenomenon of blow up for different problems has been the issue of intensive study, see for example \cite{friedman},\cite{fujita},\cite{hayakawa},\cite{levine},\cite{souplet} and the references therein. There exists many theoretical studies on the question of the occurence of blow up, but from a numerical point of view, many interesting numerical questions for problem \eqref{exacte} are not treated.\\
	
	\noindent We define the blow-up set for problem \eqref{exacte} as:
	\begin{equation*}
	B(u)=\left\{ x\in [-1,1];\ \ \exists \  (x_{n},t_{n})\rightarrow (x,T^{*})\ \text{ such that } u(x_{n},t_{n})\rightarrow +\infty  \text{ as } n\rightarrow +\infty \right\}.
	\end{equation*}
	It is proved in \cite{chiblikfila} that the solution of \eqref{exacte} blows up only at the central point, that is:
	\begin{equation*}
	\exists \ \ T^{*}<+\infty  \text{ such that }  \lim_{t\rightarrow T^{*}}u(t,0)=+\infty \text{ but }\ \lim_{t\rightarrow T^{*}}u(t,x)<\infty \text{   when  }  x\neq 0.
	\end{equation*}
	In \cite{hani}, we have conctructed a finite difference scheme whose solution satisfies the same properties as the exact solution and moreover, we have proved that its solution blows up in a finite time. In this paper and for the same scheme, we show the convergence of the numerical solution to the continuous one under some restrictions on $p$ and $q$, and we study the asymptotic behaviour of the solution near its singularity. We prove that the numerical solution can blow up at more than one point, while a one point blow up is known to occur in the continuous problem. More precisely, we show that even if a difference solution blows up, its values remain bounded up to the moment of blow up except at the maximum point and its adjacent points, moreover, the number of blow up points depends, in a way, on the value of the parameter $q$.\\
	
	\noindent We recall the scheme studied in \cite{hani}, for $j=1,...,N_{n}$ and $n\geq 0$ we have
		\begin{equation}
		\left\{ 
		\begin{array}{lllll}
		\dfrac{u_{j}^{n+1}-u_{j}^{n}}{\tau_{n}}=\dfrac{u_{j+1}^{n+1}-2u_{j}^{n+1}+u_{j-1}^{n+1}}{h_{n}^{2}}+(u_{j}^{n})^{p}-\dfrac{1}{(2h_{n})^{q}}\left| u_{j+1}^{n}-u_{j-1}^{n}\right| ^{q-1}\left| u_{j+1}^{n+1}-u_{j-1}^{n+1}\right| ,\\
		u_{j}^{0}=u_{0}(x_{j}),\\
		u_{0}^{n}=u_{N_{n}+1}^{n}=0.\\
		\end{array}
		\right.
		\label{approchee}
		\end{equation}
	\noindent We denote by $U^{n}:=(u_{0}^{n},...,u_{N_{n}+1}^{n})^{t}$ the numerical solution of \eqref{approchee}, and $$\left\|U^{n}\right\|_{\infty}=\max\limits_{1\leq j\leq N_{n}}|u_{j}^{n}|$$ the $L^{\infty}$ norm of $U^{n}$.\\
	Here the notation $u_{j}^{n}$ is employed to denote the approximation of $u(x_{j},t^{n})$ for $x_{j}\in [-1,1]$ and $t^{n}\geq 0.$ Also, we fix other notations as follow:
	\begin{enumerate}
		\item $\tau: $ size parameter for the variable time mesh $\tau_{n}$.
		\item $h: $ size parameter for the variable space mesh $h_{n}$.	 
		\item $t^{n}$: $n$-th time step on $t>0$ determined as:	
		\begin{equation*}
		\left\{
		\begin{array}{lll}
		t_{0}=0\\
		t_{n}=t^{n-1}+\tau_{n-1}=\sum\limits_{k=1}^{n-1}{\tau_{k}}; \ n\geq 1.\\	
		\end{array}
		\right.\\
		\end{equation*}	
		\item $x_{j}$: $j$-th net point on $[-1,1]$ determined as:
		\begin{equation*}
		\left\{ 
		\begin{array}{lll}
		x_{0}=-1,\\
		x_{j}=x_{j-1}+h_{n},\  j\geq 1 \text{ and }\  n\geq 0,\\
		x_{N_{n}+1}=1.
		\end{array}
		\right.\\
	    \end{equation*}
		We suppose that a spatial net point $x_{m}$ coincides with the middle point $x=0.$
		\item $\tau_{n}: $ discrete time increment  of $n-$th step determined by
		\begin{equation*}
		\tau_{n}=\tau \min\left(1,\left\|U^{n}\right\|_{\infty}^{-p+1}\right).
		\end{equation*} 
		\item $h_{n}: $ discrete space increment of $n-$th step determined by $$h_{n}=
		\min\left(h,\left(2\left\|U^{n}\right\|_{\infty}^{-q+1}\right)^{\frac{1}{2-q}}\right).$$ 
		\item $N_{n}=\dfrac{1}{h_{n}}-1$ the number of subdivisions of the interval $[-1,1].$
		\item $m=\dfrac{N_{n}+1}{2}.$
	\end{enumerate}
	As in \cite{hani}, we suppose that the initial data $u_{0}$ satisfies the following conditions:\\
	(A1) $u_{0}$ is continuous, nonconstant and nonnegative in $[-1,1].$\\
	(A2) $u_{0}$ is spatially symmetric about $x=0.$\\
	(A3) $u_{0}$ is strictly monotone increasing in $[-1,0].$\\
	(A4) $u_{0}(-1)=u_{0}(1)=0.$\\
	(A5) $u_{0}$ is large in the sense that $\left\|u_{0}\right\|_{\infty}>>1.$\\
	
	This paper is organized as follows: In section 2, we state and prove the main results, that is, if $p=2$ and $q=1$ then the solution blows up at the maximum point and the points around it, but remains bounded at all of the rest points, while if $p>2$ and $q<\dfrac{2(p-1)}{p}$, then there is only a single point for the solution to blow up. In section 3, we prove the convergence of the numerical solution to the exact one. In section 4, we give an approximation of the blowing-up time. Finally, in section 5, we present some numerical simulations.
	\section{Main theorems}
	In this section, we study the asymptotic behaviour of the difference solution near the maximal point $x_{m}.$
	\begin{th1}
		Let $U^{n}$ be a solution of \eqref{approchee}, we suppose that $h<\dfrac{1}{1+\tau}$. For $p=2$ and $q=1$, we have 
		\begin{equation*}
		\lim_{n\rightarrow +\infty} u_{m-1}^{n}=\lim_{n\rightarrow +\infty} u_{m+1}^{n}=+\infty.
		\end{equation*}
	\end{th1}
	\begin{proof}
		For $j=m-1$, the equation of \eqref{approchee} can be rewritten as
		\begin{eqnarray}
		&& (1+2\lambda_{n})u_{m-1}^{n+1} \label{aa}\\
		&=&\lambda_{n}(u_{m-2}^{n+1}+u_{m}^{n+1})+u_{m-1}^{n}+\tau_{n}(u_{m-1}^{n})^{p}
	     -\dfrac{\tau_{n}}{(2h_{n})^{q}}\left|u_{m}^{n}-u_{m-2}^{n}\right|^{q-1}\left|u_{m}^{n+1}-u_{m-2}^{n+1}\right|.\nonumber \\ \nonumber 	
		\end{eqnarray}
		Using positivity and monotony we get
		\begin{equation*}
		(1+2\lambda_{n})u_{m-1}^{n+1} \geq  \lambda_{n}u_{m}^{n+1}+u_{m-1}^{n}-\dfrac{\tau_{n}}{(2h_{n})^{q}}(u_{m}^{n}-u_{m-2}^{n})^{q-1}(u_{m}^{n+1}-u_{m-2}^{n+1}).
		\end{equation*}
		We use that 
		\begin{equation}
		u_{m}^{n}-u_{m-2}^{n}\leq 2 u_{m}^{n} \text{ and } u_{m}^{n+1}-u_{m-2}^{n+1}\leq 2 u_{m}^{n+1},
		\label{bb}
		\end{equation}
		we obtain
		\begin{equation}
		u_{m-1}^{n+1}\geq \dfrac{\lambda_{n}}{1+2\lambda_{n}}u_{m}^{n+1}+\dfrac{1}{1+2\lambda_{n}}u_{m-1}^{n}-\dfrac{\tau_{n}}{h_{n}^{q}(1+2\lambda_{n})}(u_{m}^{n})^{q-1}u_{m}^{n+1}.
		\label{eq42}
		\end{equation}
		Furthermore from \eqref{aa} for $j=m$, we have
		\begin{equation}
		u_{m}^{n+1}=\dfrac{2\lambda_{n}}{1+2\lambda_{n}}u_{m-1}^{n+1}+\dfrac{u_{m}^{n}}{1+2\lambda_{n}}\left(1+\tau_{n}(u_{m}^{n})^{p-1}\right), 
		\label{eq43}
		\end{equation} 
		which implies that
		\begin{equation}
		u_{m}^{n+1}\geq \dfrac{u_{m}^{n}}{1+2\lambda_{n}}. 
		\label{eq44}
		\end{equation} 
		Using \eqref{eq42}, \eqref{eq43} and \eqref{eq44} we get for $p=2$ and $q=1$
			\begin{equation*}
			u_{m-1}^{n+1}\geq  \frac{\lambda_{n}}{(1+2\lambda_{n})^{2}}u_{m}^{n}+\frac{1}{1+2\lambda_{n}}u_{m-1}^{n}
			-\frac{\tau_{n}}{h_{n}(1+2\lambda_{n})}\left[\frac{2\lambda_{n}}{1+2\lambda_{n}}u_{m-1}^{n+1}+\frac{u_{m}^{n}}{1+2\lambda_{n}}(1+\tau_{n}u_{m}^{n})\right].
			\end{equation*}
		Then, 
			\begin{equation*}
			\left(1+\frac{2\tau_{n}\lambda_{n}}{h_{n}(1+2\lambda_{n})^{2}}\right)u_{m-1}^{n+1}\geq \frac{\lambda_{n}}{(1+2\lambda_{n})^{2}}u_{m}^{n}+\frac{1}{1+2\lambda_{n}}u_{m-1}^{n}
			-\frac{\tau_{n}u_{m}^{n}(1+\tau_{n}u_{m}^{n})}{h_{n}(1+2\lambda_{n})^{2}},
			\end{equation*}
		which implies that
		\begin{equation}
		u_{m-1}^{n+1}\geq \frac{\lambda_{n}h_{n}u_{m}^{n}+h_{n}(1+2\lambda_{n})u_{m-1}^{n}-\tau_{n}u_{m}^{n}(1+\tau_{n}u_{m}^{n})}{h_{n}(1+2\lambda_{n})^{2}+2\tau_{n}\lambda_{n}}.
		\label{eq45}
		\end{equation}
		Since the solution blows up, then we have $u_{m}^{n}>1$, moreover
		\begin{equation*}
		\tau_{n}=\frac{\tau}{u_{m}^{n}}  \text{ and } \  h_{n}=\min(\sqrt{2},h)=h.
		\end{equation*}
		Then
		\begin{equation*}
		\lambda_{n}=\frac{\tau}{h^{2}u_{m}^{n}}, \ \ \ h_{n}\lambda_{n}=\frac{\tau}{hu_{m}^{n}} \text{ and } \  \tau_{n}\lambda_{n}=\frac{\tau^{2}}{h^{2}(u_{m}^{n})^{2}}.
		\end{equation*}
		Hence, \eqref{eq45} implies
		\begin{equation*}
		u_{m-1}^{n+1}\geq \dfrac{\dfrac{\tau}{h}+h\left(1+\dfrac{2\tau}{h^{2}u_{m}^{n}}\right)u_{m-1}^{n}-\tau(1+\tau)}{h\left(1+\dfrac{2\tau}{h^{2}u_{m}^{n}}\right)^{2}+\dfrac{2\tau^{2}}{(u_{m}^{n})^{2}h^{2}}}.
		\end{equation*}
		As we have
		\begin{equation*}
		\lim_{n\rightarrow +\infty}u_{m}^{n}=+\infty,
		\end{equation*}
		then we get
		\begin{equation*}
		\lim_{n\rightarrow +\infty}u_{m-1}^{n+1}\geq \dfrac{\dfrac{\tau}{h}+h \lim\limits_{n\rightarrow +\infty}u_{m-1}^{n}-\tau(1+\tau)}{h}.
		\end{equation*}
		If we assume that $\lim\limits_{n\rightarrow +\infty}u_{m-1}^{n}\neq +\infty$, let $l=\lim\limits_{n\rightarrow +\infty}u_{m-1}^{n}$, then we have
		\begin{equation*}
		l\geq \dfrac{\tau}{h^{2}}+l-\dfrac{\tau(1+\tau)}{h}
		\Rightarrow \dfrac{\tau}{h}(\dfrac{1}{h}-(1+\tau))\leq 0,
		\end{equation*}
		which is a contradiction because $h<\dfrac{1}{1+\tau}.$\\
		Therefore, we have $$\lim_{n\rightarrow +\infty} u_{m-1}^{n}=+\infty,$$ and using symmetry we get the result of Theorem 2.1.
	\end{proof}
	The next important result for this paper is mentioned in the next theorem:
	\begin{th1}
		Let $U^{n}$ be the solution of \eqref{approchee}, we suppose that $p\geq 2$ and $1\leq q<\dfrac{2(p-1)}{p}.$\\
		\textbf{(a)} If $p=2$ and $q=1$ then 
		\begin{equation*}
		\lim_{n\rightarrow +\infty} u_{m-2}^{n}< +\infty.
		\end{equation*}
		\textbf{(b)} If $p>2$ and $q<\dfrac{2(p-1)}{p}$ then 
		\begin{equation*}
		\lim_{n\rightarrow +\infty} u_{m-1}^{n}< +\infty.
		\end{equation*}
	\end{th1}
	\begin{proof}
		Let prove \textbf{(a):} In \eqref{approchee}, if we take $p=2,\ q=1$ and $j=m-2$, we get
		\begin{eqnarray*}
			\frac{u_{m-2}^{n+1}-u_{m-2}^{n}}{\tau_{n}}&=&\frac{u_{m-1}^{n+1}-2u_{m-2}^{n+1}+u_{m-3}^{n+1}}{h_{n}^{2}}+\left(u_{m-2}^{n}\right)^{2}-\frac{1}{2h_{n}}\left(u_{m-1}^{n+1}-u_{m-3}^{n+1}\right)\\
			&\leq& \frac{u_{m-1}^{n+1}-2u_{m-2}^{n+1}+u_{m-3}^{n+1}}{h_{n}^{2}}+\left(u_{m-2}^{n}\right)^{2},
		\end{eqnarray*}
		but $u_{m-3}^{n+1}-u_{m-2}^{n+1}<0$, then
		\begin{equation*}
		\dfrac{u_{m-2}^{n+1}-u_{m-2}^{n}}{\tau_{n}}\leq \dfrac{u_{m-1}^{n+1}-u_{m-2}^{n+1}}{h_{n}^{2}}+\left(u_{m-2}^{n}\right)^{2},
		\end{equation*}
		which implies that 
		\begin{equation}
		(1+\lambda_{n})u_{m-2}^{n+1}\leq \lambda_{n}u_{m-1}^{n+1}+\left(1+\tau_{n}u_{m-2}^{n}\right)u_{m-2}^{n}.
		\label{eq46}
		\end{equation}
		In the other hand, in \eqref{aa} if we take $j=m-1$, we get
		\begin{equation*}
		\frac{u_{m-1}^{n+1}-u_{m-1}^{n}}{\tau_{n}}\leq \frac{u_{m-2}^{n+1}-2u_{m-1}^{n+1}+u_{m}^{n+1}}{h_{n}^{2}}+\left(u_{m-1}^{n}\right)^{2},
		\end{equation*}
		but $u_{m-2}^{n+1}-u_{m-1}^{n+1}<0$, then
		\begin{equation*}
		\frac{u_{m-1}^{n+1}-u_{m-1}^{n}}{\tau_{n}}\leq \frac{-u_{m-1}^{n+1}+u_{m}^{n+1}}{h_{n}^{2}}+\left(u_{m-1}^{n}\right)^{2},
		\end{equation*}
		which implies that 
		\begin{equation*}
		(1+\lambda_{n})u_{m-1}^{n+1}\leq \lambda_{n}u_{m}^{n+1}+\left(1+\tau_{n}u_{m-1}^{n}\right)u_{m-1}^{n},
		\end{equation*}
		and then 
		\begin{equation}
		u_{m-1}^{n+1}\leq \frac{\lambda_{n}u_{m}^{n+1}+\left(1+\tau_{n}u_{m-1}^{n}\right)u_{m-1}^{n}}{1+\lambda_{n}}.
		\label{eq47}
		\end{equation}
		Next, if we recall \eqref{eq43} for $p=2$ we get
		\begin{equation}
		u_{m}^{n+1}=\frac{2\lambda_{n}}{1+2\lambda_{n}}u_{m-1}^{n+1}+\frac{1+\tau_{n}u_{m}^{n}}{1+2\lambda_{n}}u_{m}^{n}.
		\label{eq48}
		\end{equation}
		Putting \eqref{eq48} in \eqref{eq47} we get
		\begin{equation*}
		u_{m-1}^{n+1}\leq \frac{\lambda_{n}}{1+\lambda_{n}}\left[\frac{2\lambda_{n}}{1+2\lambda_{n}}u_{m-1}^{n+1}+\frac{1+\tau_{n}u_{m}^{n}}{1+2\lambda_{n}}u_{m}^{n}\right]+\frac{\left(1+\tau_{n}u_{m-1}^{n}\right)}{1+\lambda_{n}}u_{m-1}^{n},
		\end{equation*}
		which implies that
		\begin{equation*}
		\left(1-\frac{2\lambda_{n}^{2}}{(1+\lambda_{n})(1+2\lambda_{n})}\right)u_{m-1}^{n+1}\leq \frac{\lambda_{n}(1+\tau_{n}u_{m}^{n})}{(1+2\lambda_{n})(1+\lambda_{n})}u_{m}^{n}+\frac{1+\tau_{n}u_{m-1}^{n}}{1+\lambda_{n}}u_{m-1}^{n},
		\end{equation*}
		and then
		\begin{equation} 
		u_{m-1}^{n+1}\leq \frac{\lambda_{n}(1+\tau_{n}u_{m}^{n})u_{m}^{n}+(1+2\lambda_{n})(1+\tau_{n}u_{m-1}^{n})u_{m-1}^{n}}{1+3\lambda_{n}}.
		\label{eq49}
		\end{equation}
		Now, putting \eqref{eq49} in \eqref{eq46}, we get
		\begin{eqnarray*} 
			u_{m-2}^{n+1}&\leq& \frac{\lambda_{n}}{1+\lambda_{n}}\left[\frac{\lambda_{n}(1+\tau_{n}u_{m}^{n})u_{m}^{n}+(1+2\lambda_{n})(1+\tau_{n}u_{m-1}^{n})u_{m-1}^{n}}{1+3\lambda_{n}}\right]+\frac{\left(1+\tau_{n}u_{m-2}^{n}\right)u_{m-2}^{n}}{1+\lambda_{n}}\\ &=&\frac{\left(1+\tau_{n}u_{m-2}^{n}\right)u_{m-2}^{n}}{1+\lambda_{n}}+\frac{\lambda_{n}^{2}(1+\tau_{n}u_{m}^{n})u_{m}^{n}+\lambda_{n}(1+2\lambda_{n})(1+\tau_{n}u_{m-1}^{n})u_{m-1}^{n}}{(1+\lambda_{n})(1+3\lambda_{n})}.
		\end{eqnarray*}
		Then
		\begin{equation}
		u_{m-2}^{n+1}\leq A_{n}u_{m-2}^{n}+B_{n},
		\label{au}
		\end{equation}
		here we have put
		\begin{equation*}
		A_{n}=\frac{\left(1+\tau_{n}u_{m-2}^{n}\right)}{1+\lambda_{n}}
		\end{equation*}
		and 
		\begin{equation*}
		B_{n}=\frac{\lambda_{n}^{2}(1+\tau_{n}u_{m}^{n})u_{m}^{n}+\lambda_{n}(1+2\lambda_{n})(1+\tau_{n}u_{m-1}^{n})u_{m-1}^{n}}{(1+\lambda_{n})(1+3\lambda_{n})}.
		\end{equation*}
		Then the inequality \eqref{au}
		implies by iterations that
		\begin{eqnarray*}
			u_{m-2}^{n}&\leq & A_{n-1}u_{m-2}^{n-1}+B_{n-1}\\
			&\leq& A_{n-1}A_{n-2}u_{m-2}^{n-2}+A_{n-1}B_{n-2}+B_{n-1}\\
			&&\vdots\\
			&\leq& u_{m-2}^{0}\prod_{k=0}^{n-1}{A_{k}}+\sum_{k=0}^{n-2}\left({B_{k}}\prod_{i=k+1}^{n-1}{A_{i}}\right)+B_{n-1} \\
			&\leq& u_{m-2}^{0}\prod_{k=0}^{n}{A_{k}}+\sum_{k=0}^{n-2}{ B_{k}}\prod_{i=0}^{n-1}{A_{i}}+B_{n-1}\\
			&\leq& u_{m-2}^{0}\prod_{k=0}^{n}{A_{k}}+\sum_{k=0}^{n-2}{ B_{k}}\prod_{k=0}^{n}{A_{k}}+B_{n-1}\left(\prod_{k=0}^{n}{A_{k}}\right)\\
			&\leq& u_{m-2}^{0}\prod_{k=0}^{n}{A_{k}}+\sum_{k=0}^{n-1}{ B_{k}}\prod_{k=0}^{n}{A_{k}}\\
			&\leq& u_{m-2}^{0}\prod_{k=0}^{n}{A_{k}}+\sum_{k=0}^{n}{ B_{k}}\prod_{k=0}^{n}{A_{k}}\\
			&\leq& \left(u_{m-2}^{0}+\sum_{k=0}^{n}{ B_{k}}\right)\prod_{k=0}^{n}{A_{k}}.
		\end{eqnarray*}
		To ensure boundedness of $u_{m-2}^{n}$ we shall prove that 
		\begin{equation*}
		\sum_{n\geq 0}{ B_{n}}<+\infty \text{ and } \prod_{n\geq0}{A_{n}}<+\infty.
		\end{equation*}
		To do this, we need the next lemma:
		\begin{lem}
			We define the sequence $a_{n}=\dfrac{u_{m-1}^{n}}{u_{m}^{n}}.$
			\begin{enumerate}
				\item For $p=2$ and $q=1$, we assume that $\sup\limits_{n}u_{m-1}^{n}>\dfrac{3}{h^{2}}(1+\tau),$ then $(a_{n})_{n}$ converges to 0.
				\item For $p>2$ and $q<\dfrac{2(p-1)}{p}$, we have
				\begin{enumerate}
					\item $(a_{n})_{n}$ converges to 0.
					\item $\lim\limits_{n\rightarrow +\infty}\dfrac{a_{n+1}}{a_{n}}=\dfrac{1}{1+\tau}.$
					\item $\lim\limits_{n\rightarrow +\infty}\dfrac{u^{n+1}_{m}}{u^{n}_{m}}=1+\tau>1.$
				\end{enumerate}
			\end{enumerate}
		\end{lem}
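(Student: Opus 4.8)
The strategy is to turn the coupled scheme into a scalar recursion for $a_n$, driven by the known growth rate of the maximum. Since the solution blows up, $u_m^n=\|U^n\|_{\infty}\to+\infty$, so for large $n$ the mesh rules give $\tau_n=\tau (u_m^n)^{-(p-1)}$ and hence $\tau_n(u_m^n)^{p-1}=\tau$; a direct computation shows that $\lambda_n=\tau_n/h_n^2$ is of the order $(u_m^n)^{\gamma}$ with $\gamma=\dfrac{q(p+1)-2p}{2-q}\le 0$, so $\lambda_n\to 0$. I would first record the growth of the maximum: dividing \eqref{eq48} by $u_m^n$ and using $0\le u_{m-1}^{n+1}\le u_m^{n+1}$ gives the exact identity $\dfrac{u_m^{n+1}}{u_m^n}=\dfrac{1+\tau}{1+2\lambda_n(1-a_{n+1})}$, hence $\dfrac{1+\tau}{1+2\lambda_n}\le \dfrac{u_m^{n+1}}{u_m^n}\le 1+\tau$. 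Letting $n\to+\infty$ proves statement (2)(c) and yields $\dfrac{u_m^n}{u_m^{n+1}}\to\dfrac{1}{1+\tau}$, a fact I use repeatedly.

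Next I would derive the recursion for $a_n$. Taking $j=m-1$ in \eqref{approchee}, discarding the nonpositive gradient term and using the monotonicity $u_{m-2}^{n+1}\le u_{m-1}^{n+1}$, one gets $(1+\lambda_n)u_{m-1}^{n+1}\le \lambda_n u_m^{n+1}+\big(1+\tau_n(u_{m-1}^n)^{p-1}\big)u_{m-1}^n$. The decisive point is that the reaction felt at the node $m-1$ is weakened by the ratio itself: $\tau_n(u_{m-1}^n)^{p-1}=\tau a_n^{\,p-1}$, not $\tau$. Dividing by $u_m^{n+1}$ and inserting the bound for $u_m^n/u_m^{n+1}$ from the first step produces $a_{n+1}\le c_n a_n+\varepsilon_n$, with $c_n=\dfrac{(1+\tau a_n^{\,p-1})(1+2\lambda_n)}{(1+\tau)(1+\lambda_n)}$ and $\varepsilon_n=\dfrac{\lambda_n}{1+\lambda_n}$. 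Ignoring the vanishing $\lambda_n$, this is the one-dimensional map $g(a)=\dfrac{a+\tau a^{p}}{1+\tau}$, whose fixed points in $[0,1]$ are only $0$ and $1$ and which satisfies $g'(0)=\dfrac{1}{1+\tau}<1$; this is the genuine origin of the contraction and of the constant $\dfrac{1}{1+\tau}$ in (2)(b).

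To obtain $a_n\to0$ (statements (1) and (2)(a)) I would run a $\limsup$ argument. Setting $L=\limsup_n a_n$ and passing to a subsequence in $a_{n+1}\le c_n a_n+\varepsilon_n$, the relations $\lambda_n\to0$, $\varepsilon_n\to0$ give $L(1+\tau)\le(1+\tau L^{\,p-1})L$, i.e. $L^{\,p-1}\ge 1$; since $a_n<1$ this leaves only $L=0$ or $L=1$, and $L=1$ is excluded because $g(a)<a$ on $(0,1)$ makes the drift strictly negative near $1$. For $p>2$ this is unconditional. For $p=2,\ q=1$ the coefficient $c_n$ only tends to $\dfrac{1}{1+\tau}$ and $\lambda_n=\dfrac{\tau}{h^2u_m^n}$ is of the same order as the drift $a_n-g(a_n)=\dfrac{\tau a_n(1-a_n)}{1+\tau}$, so convergence must be fed by this drift dominating $\varepsilon_n$; this is exactly what the hypothesis $\sup_n u_{m-1}^n>\dfrac{3}{h^2}(1+\tau)$ secures, because $a_n>\dfrac{(1+\tau)\lambda_n}{\tau}$ is equivalent to $u_{m-1}^n>\dfrac{1+\tau}{h^2}$, the factor $3$ absorbing the $(1+2\lambda_n)/(1+\lambda_n)$ corrections.

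Finally, for the rate in (2)(b) I would write $\dfrac{a_{n+1}}{a_n}=\dfrac{u_{m-1}^{n+1}/u_{m-1}^n}{u_m^{n+1}/u_m^n}$; the denominator tends to $1+\tau$, so it remains to show the numerator tends to $1$. The upper bound above, divided by $a_n$, gives $\dfrac{a_{n+1}}{a_n}\le\dfrac{(1+\tau a_n^{\,p-1})(1+2\lambda_n)}{(1+\tau)(1+\lambda_n)}$, while a matching lower bound comes from keeping the gradient term in \eqref{approchee} and estimating it through \eqref{bb}, which contributes only a lower-order correction. The main obstacle is the interplay between the decays of $\lambda_n$ and of $a_n$: once $a_n\to0$ one has $\tau a_n^{\,p-1}\to0$ (here $p-1>1$ is used), and the leftover errors are of size $O(\lambda_n/a_n)$. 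Since $a_n\asymp(1+\tau)^{-n}$ while $\lambda_n\asymp(1+\tau)^{n\gamma}$, the ratio $\lambda_n/a_n$ tends to $0$ precisely when $\gamma<-1$, and a short computation shows that $\gamma<-1$ is equivalent to $q<\dfrac{2(p-1)}{p}$. This is exactly the hypothesis of (2), and it is also why the borderline case $p=2,\ q=1$ (where $\gamma=-1$, so $\lambda_n$ and $a_n$ decay at the same rate and resonance spoils the rate) yields only $a_n\to0$ and not the limit $\dfrac{1}{1+\tau}$.
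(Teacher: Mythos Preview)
Your sandwiching argument for (2)(c) is correct and in fact cleaner than the paper's: the paper derives $\lim u_m^{n+1}/u_m^n=1+\tau$ only \emph{after} proving $a_n\to0$, whereas your identity $\dfrac{u_m^{n+1}}{u_m^n}=\dfrac{1+\tau}{1+2\lambda_n(1-a_{n+1})}$ together with $\lambda_n\to0$ gives it immediately. The identification $\gamma<-1\iff q<\dfrac{2(p-1)}{p}$ is also exactly the right way to understand why this threshold appears.

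There are, however, two genuine gaps. First, your $\limsup$ argument does not exclude $L=1$. The drift $a-g(a)=\dfrac{\tau a(1-a^{p-1})}{1+\tau}$ \emph{vanishes} as $a\to1$, so ``strictly negative near $1$'' is false in the sense you need: near $a_n=1$ your contraction factor $c_n$ is within $O(1-a_n)+O(\lambda_n)$ of $1$, while the inhomogeneous term $\varepsilon_n\sim\lambda_n$ is of the \emph{same} order, and nothing you have written rules out $1-a_n\ll\lambda_n$. The paper closes this gap by deriving a sharper bound than your $a_{n+1}\le c_na_n+\varepsilon_n$: substituting $u_m^{n+1}$ from \eqref{eq43} back into the inequality at $j=m-1$ produces the rational estimate
\[
a_{n+1}\le \frac{\lambda_n(1+\tau)+(1+2\lambda_n)(1+\tau a_n^{p-1})a_n}{(1+\lambda_n)(1+\tau)+2\lambda_n(1+\tau a_n^{p-1})a_n},
\]
whose denominator carries an extra $2\lambda_n(\cdots)a_n$ that your linearised bound throws away. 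From this the paper shows \emph{strict monotonicity} $a_{n+1}<a_n$ for every $n$ (by checking that a certain quantity $A$ is negative, which for $p>2$ reduces to $\lambda_n<(1+\lambda_n)a_n$, and for $p=2,\ q=1$ is precisely where the hypothesis $u_{m-1}^n>3h^{-2}(1+\tau)$ is consumed). Monotonicity then forces the limit to exist and be $<a_0<1$, after which your fixed-point dichotomy finishes the job.

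Second, your argument for (2)(b) is circular as written: you invoke $a_n\asymp(1+\tau)^{-n}$ to conclude $\lambda_n/a_n\to0$, but $a_n\asymp(1+\tau)^{-n}$ is exactly the consequence of the limit you are trying to prove. The paper avoids this by not comparing two unknown sequences: it writes
\[
\lambda_n a_n^{-1}=c\,(u_{m-1}^n)^{-1}(u_m^n)^{\gamma+1},
\]
and uses only that $\gamma+1=\dfrac{-2p+pq+2}{2-q}<0$ (equivalent to $q<\dfrac{2(p-1)}{p}$) together with $u_m^n\to\infty$, so the decisive decay comes from the \emph{known} growth of $u_m^n$, not from an assumed rate for $a_n$. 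If you want to stay with your framework, you would first need a \emph{lower} bound on $a_n$ (from your ``matching lower bound'') that is independent of the conclusion, and only then feed it into $\lambda_n/a_n$.
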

		\begin{proof}
			First of all, we look for some useful relations between $a_{n}$ and $a_{n+1}$. We recall \eqref{aa} for $p\geq 2$ and $1\leq q< \dfrac{2(p-1)}{p}<\dfrac{2p}{p+1}.$ We use the same calculations as \eqref{eq49} we obtain that \eqref{aa} implies
			\begin{equation}
			u_{m-1}^{n+1}\leq \frac{\lambda_{n}(1+\tau_{n}(u_{m}^{n})^{p-1})u_{m}^{n}+(1+2\lambda_{n})(1+\tau_{n}(u_{m-1}^{n})^{p-1})u_{m-1}^{n}}{1+3\lambda_{n}}.
			\label{eq411}
			\end{equation}
			Using \eqref{eq43}, we get
			\begin{eqnarray}
			\nonumber a_{n+1}& =&\dfrac{u_{m-1}^{n+1}}{u_{m}^{n+1}} \\
			\nonumber &=& \dfrac{1+2\lambda_{n}}{\dfrac{2\lambda_{n}u_{m-1}^{n+1}+(1+\tau_{n}(u_{m}^{n})^{p-1})u_{m}^{n}}{u_{m-1}^{n+1}}}\\
			&= &\dfrac{1+2\lambda_{n}}{2\lambda_{n}+\dfrac{(1+\tau_{n}(u_{m}^{n})^{p-1})u_{m}^{n}}{u_{m-1}^{n+1}}}.
			\label{star}
			\end{eqnarray}
			By substituting \eqref{eq411} into \eqref{star} we get:
			\begin{eqnarray*}
				a_{n+1}& \leq& (1+2\lambda_{n}) \left\{2\lambda_{n}+\frac{(1+3\lambda_{n})(1+\tau_{n}(u_{m}^{n})^{p-1})u_{m}^{n}}{\lambda_{n}(1+\tau_{n}(u_{m}^{n})^{p-1})u_{m}^{n}+(1+2\lambda_{n})(1+\tau_{n}(u_{m-1}^{n})^{p-1})u_{m-1}^{n}}\right\}^{-1}  \\ 
				& = & \frac{\lambda_{n}(1+2\lambda_{n})(1+\tau_{n}(u_{m}^{n})^{p-1})u_{m}^{n}+(1+2\lambda_{n})^{2}(1+\tau_{n}(u_{m-1}^{n})^{p-1})u_{m-1}^{n}}{(1+3\lambda_{n}+2\lambda_{n}^{2})(1+\tau_{n}(u_{m}^{n})^{p-1})u_{m}^{n}+2\lambda_{n}(1+2\lambda_{n})(1+\tau_{n}(u_{m-1}^{n})^{p-1})u_{m-1}^{n}}\\
				& \leq&
				\frac{\lambda_{n}(1+\tau_{n}(u_{m}^{n})^{p-1})u_{m}^{n}+(1+2\lambda_{n})(1+\tau_{n}(u_{m-1}^{n})^{p-1})u_{m-1}^{n}}{(1+\lambda_{n})(1+\tau_{n}(u_{m}^{n})^{p-1})u_{m}^{n}+2\lambda_{n}(1+\tau_{n}(u_{m-1}^{n})^{p-1})u_{m-1}^{n}}\\
				& \leq&
				\frac{\lambda_{n}(1+\tau_{n}(u_{m}^{n})^{p-1})+(1+2\lambda_{n})(1+\tau_{n}(u_{m-1}^{n})^{p-1})a_{n}}{(1+\lambda_{n})(1+\tau_{n}(u_{m}^{n})^{p-1})+2\lambda_{n}(1+\tau_{n}(u_{m-1}^{n})^{p-1})a_{n}}.
			\end{eqnarray*}	
			But we have $\tau_{n}=\dfrac{\tau}{(u_{m}^{n})^{p-1}}$, then
			\begin{equation}
			a_{n+1}\leq \frac{\lambda_{n}(1+\tau)+(1+2\lambda_{n})(1+\tau(a_{n})^{p-1})a_{n}}{(1+\lambda_{n})(1+\tau)+2\lambda_{n}(1+\tau(a_{n})^{p-1})a_{n}}.
			\label{eq412}
			\end{equation}
			And finally we get 
			\begin{equation}
			\frac{a_{n+1}}{a_{n}}\leq
			\frac{\lambda_{n}(1+\tau)(a_{n})^{-1}+(1+2\lambda_{n})(1+\tau(a_{n})^{p-1})}{(1+\lambda_{n})(1+\tau)+2\lambda_{n}(1+\tau(a_{n})^{p-1})a_{n}}. 
			\label{eq413}
			\end{equation}
			In the other hand, using \eqref{aa} and \eqref{bb} we get
			\begin{equation*}
			u_{m-1}^{n+1} \geq
			\frac{\lambda_{n}u_{m}^{n+1}+(1+\tau_{n}(u_{m-1}^{n})^{p-1})u_{m-1}^{n}}{1+2\lambda_{n}}-\frac{\tau_{n}}{h_{n}^{q}(1+2\lambda_{n})}(u_{m}^{n})^{q-1}u_{m}^{n+1}.
			\end{equation*}
			By using \eqref{eq43}, we have
			\begin{align*}
			u_{m-1}^{n+1}
			& \geq
			\frac{2\lambda_{n}^{2}u_{m-1}^{n+1}+\lambda_{n}(1+\tau_{n}(u_{m}^{n})^{p-1})u_{m}^{n}+(1+2\lambda_{n})(1+\tau_{n}(u_{m-1}^{n})^{p-1})u_{m-1}^{n}}{(1+2\lambda_{n})^{2}}\\
			&\ \ \ \ \ \  -\frac{2\lambda_{n}\tau_{n}(u_{m}^{n})^{q-1}u_{m-1}^{n+1}}{h_{n}^{q}(1+2\lambda_{n})^{2}}-\frac{\tau_{n}(1+\tau_{n}(u_{m}^{n})^{p-1})(u_{m}^{n})^{q}}{h_{n}^{q}(1+2\lambda_{n})^{2}},
			\end{align*}
			which implies
			\begin{eqnarray*}
				&&\left(1-\frac{2\lambda_{n}^{2}}{(1+2\lambda_{n})^{2}}+\frac{2\lambda_{n}\tau_{n}(u_{m}^{n})^{q-1}}{h_{n}^{q}(1+2\lambda_{n})^{2}}\right)u_{m-1}^{n+1}\\
				&\geq& \frac{\lambda_{n}(1+\tau_{n}(u_{m}^{n})^{p-1})u_{m}^{n}+(1+2\lambda_{n})(1+\tau_{n}(u_{m-1}^{n})^{p-1})u_{m-1}^{n}}{(1+2\lambda_{n})^{2}}\\ &&\ \ \ \ -\frac{\tau_{n}(1+\tau_{n}(u_{m}^{n})^{p-1})(u_{m}^{n})^{q}}{h_{n}^{q}(1+2\lambda_{n})^{2}},
			\end{eqnarray*}
			and then
			\begin{eqnarray}
			u_{m-1}^{n+1}
			\nonumber &\geq&
			\frac{\lambda_{n}h_{n}^{q}(1+\tau_{n}(u_{m}^{n})^{p-1})u_{m}^{n}+h_{n}^{q}(1+2\lambda_{n})(1+\tau_{n}(u_{m-1}^{n})^{p-1})u_{m-1}^{n}}{h_{n}^{q}(1+2\lambda_{n})^{2}-2h_{n}^{q}\lambda_{n}^{2}+2\lambda_{n}\tau_{n}(u_{m}^{n})^{q-1}}\\
			&& \ \ -\frac{\tau_{n}(1+\tau_{n}(u_{m}^{n})^{p-1})(u_{m}^{n})^{q}}{{h_{n}^{q}(1+2\lambda_{n})^{2}-2h_{n}^{q}\lambda_{n}^{2}+2\lambda_{n}\tau_{n}(u_{m}^{n})^{q-1}}}.
			\label{etoile}
			\end{eqnarray}
			Using \eqref{eq43} and \eqref{etoile}, we get
				\begin{eqnarray*}
					&& a_{n+1}\\
					&=&\dfrac{u_{m-1}^{n+1}}{u_{m}^{n+1}}\\
					&= & (1+2\lambda_{n})\left\{2\lambda_{n}+\frac{(1+\tau_{n}(u_{m}^{n})^{p-1})u_{m}^{n}}{u_{m-1}^{n+1}}\right\}^{-1}\\
					&\geq & \frac{(1+\tau_{n}(u_{m}^{n})^{p-1})u_{m}^{n}\left(\lambda_{n}h_{n}^{q}-\tau_{n}(u_{m}^{n})^{q-1}\right)+h_{n}^{q}(1+2\lambda_{n})(1+\tau_{n}(u_{m-1}^{n})^{p-1})u_{m-1}^{n}}{2\lambda_{n}h_{n}^{q}(1+\tau_{n}(u_{m-1}^{n})^{p-1})u_{m-1}^{n}+h_{n}^{q}(1+2\lambda_{n})(1+\tau_{n}(u_{m}^{n})^{p-1})u_{m}^{n}}.
				\end{eqnarray*}
			Then we can deduce that
			\begin{equation*}
			\frac{a_{n+1}}{a_{n}}
			\geq
			\frac{(1+\tau_{n}(u_{m}^{n})^{p-1})(\lambda_{n}-\tau_{n}h_{n}^{-q}(u_{m}^{n})^{q-1})a_{n}^{-1}+(1+2\lambda_{n})(1+\tau_{n}(u_{m-1}^{n})^{p-1})}{(1+\tau_{n}(u_{m}^{n})^{p-1})(1+2\lambda_{n})+2\lambda_{n}(1+\tau_{n}(u_{m-1}^{n})^{p-1})a_{n}}.
			\end{equation*}
			Finally we get
			\begin{equation}
			\frac{a_{n+1}}{a_{n}}
			\geq
			\frac{(1+\tau)(\lambda_{n}-\tau h_{n}^{-q}(u_{m}^{n})^{q-p})a_{n}^{-1}+(1+2\lambda_{n})(1+\tau(a_{n})^{p-1})}{(1+\tau)(1+2\lambda_{n})+2\lambda_{n}(1+\tau(a_{n})^{p-1})a_{n}}.
			\label{eq414}
			\end{equation}
			Next, we prove that the sequence $(a_{n})_{n}$ converges to 0. To prove convergence, we only need to show that $\dfrac{a_{n+1}}{a_{n}}<1$.\\
			But 
			\begin{equation*}
			\frac{a_{n+1}}{a_{n}}\leq
			\frac{\lambda_{n}(1+\tau)(a_{n})^{-1}+(1+2\lambda_{n})(1+\tau(a_{n})^{p-1})}{(1+\lambda_{n})(1+\tau)+2\lambda_{n}(1+\tau(a_{n})^{p-1})a_{n}}. 
			\end{equation*}
			Let
			\begin{equation*}
			A:=(1+\tau)\lambda_{n}+(1+2\lambda_{n})(1+\tau(a_{n})^{p-1})a_{n}-(1+\lambda_{n})(1+\tau)a_{n}-2\lambda_{n}(1+\tau(a_{n})^{p-1})a_{n}^{2}.\\ 
			\end{equation*}
			We shall prove that $A< 0.$\\
			\textbf{(1)} First of all, we can see that, for $p=2$ and $q=1$
			\begin{eqnarray*}
			A&=&(1+\tau)\lambda_{n}+(1+2\lambda_{n})(1+\tau a_{n})a_{n}-(1+\lambda_{n})(1+\tau)a_{n}-2\lambda_{n}(1+\tau a_{n})a_{n}^{2}\\
			&=& (1+\tau)\lambda_{n}+(1+\tau a_{n})a_{n}+2\lambda_{n}(1+\tau a_{n})a_{n}-(1+\tau)a_{n}-\lambda_{n}(1+\tau)a_{n}-2\lambda_{n}(1+\tau a_{n})a_{n}^{2}\\
			&=&\lambda_{n}\left(1+\tau+2a_{n}(1+\tau a_{n})-(1+\tau)a_{n}-2a_{n}^{2}(1+\tau a_{n}) \right)+\tau a_{n}^{2}-\tau a_{n}\\ 
			&=&\lambda_{n}\left((1+\tau)(1-a_{n})+2a_{n}(1+\tau a_{n})(1-a_{n}) \right)+\tau a_{n}(a_{n}-1)\\
		    &=&(1-a_{n})\lambda_{n}(1+\tau +2a_{n}(1+\tau a_{n}))+\tau a_{n}(a_{n}-1)\\
			\end{eqnarray*}
			Using
			\begin{equation*}
			\left\{
			\begin{array}{lll}
			a_{n}=\dfrac{u_{m-1}^{n}}{u_{m}^{n}},\\
			0<a_{n}<1,\\
			a_{n}<u_{m-1}^{n},\\
			\tau=\tau_{n}u_{m}^{n},\\
			h_{n}=h,\\
			\end{array}
			\right.
			\end{equation*}
			we get
			\begin{eqnarray*}
				A&=&(1-a_{n})\lambda_{n}(1+\tau +2a_{n}(1+\tau a_{n})+\tau a_{n}(a_{n}-1) \\
				& <& \lambda_{n}(1-a_{n})(1+\tau +2(1+\tau))+\tau_{n} u_{m-1}^{n}(a_{n}-1)\\
				& < &(1-a_{n})\tau_{n} (3h^{-2}(1+\tau)-u_{m-1}^{n})\\
			\end{eqnarray*}
			Using the condition: $\sup\limits_{n}\left\{u_{m-1}^{n}\right\}>3h^{-2}(1+\tau)$,we can see that $A< 0$, so that $0\leq a_{n+1}< a_{n}<1$, which implies that $\lim\limits_{n\rightarrow +\infty}a_{n}=a$ exists and satisfies $0\leq a<1.$\\
			\textbf{(2)} For $p>2$ and $q<\dfrac{2p-2}{p},$ we can see that
			\begin{equation*}
			\lambda_{n}-(1+\lambda_{n})a_{n}<0,
			\end{equation*}
			if not, then, 
			\begin{equation*}
			\dfrac{\lambda_{n}}{a_{n}}\geq 1+\lambda_{n} \Rightarrow \frac{\tau 2^{\frac{-2}{2-q}}}{u_{m-1}^{n}}\left(u_{m}^{n}\right)^{\frac{2-2p+pq}{2-q}}\geq 1+\lambda_{n},
			\end{equation*}
			which is a contradiction because of $q<\frac{2p-2}{p}.$\\
			Let now, 
			\begin{equation*}
			A_{1}=\frac{1+\tau}{1+\tau a_{n}^{p-1}}>1
			\end{equation*}
			and
			\begin{equation*}
			A_{2}=\frac{2\lambda_{n}a_{n}-(1+2\lambda_{n})}{\lambda_{n}-(1+\lambda_{n})a_{n}}<1.
			\end{equation*}
			Then it is clear that
			\begin{eqnarray*}
				&& A_{1}> a_{n}A_{2}.\\
				&&\Rightarrow \frac{1+\tau}{1+\tau a_{n}^{p-1}} > \frac{a_{n}(2\lambda_{n}a_{n}-(1+2\lambda_{n}))}{\lambda_{n}-(1+\lambda_{n})a_{n}}.\\
				&&\Rightarrow (1+\tau)(\lambda_{n}-(1+\lambda_{n})a_{n}) < a_{n} (1+\tau a_{n}^{p-1})(2\lambda_{n}a_{n}-(1+2\lambda_{n})).\\
				&&\Rightarrow (1+\tau)\lambda_{n}+(1+2\lambda_{n})a_{n}(1+\tau a_{n}^{p-1})-(1+\tau)(1+\lambda_{n}a_{n})-2\lambda_{n}a_{n}^{2}(1+\tau a_{n}^{p-1})< 0.\\
				&&\Rightarrow A< 0.
			\end{eqnarray*}
			So $0\leq a_{n+1} < a_{n} < 1.$\\
			We shall prove now that $a=0$ for all $p>1$ and $1\leq q<\dfrac{2(p-1)}{p}$. By reduction to absurdity we suppose that $0<a<1$. Letting $n\rightarrow \infty $ in \eqref{eq412} we obtain 
			\begin{equation*}
			a \leq \dfrac{1+\tau a^{p-1}}{1+\tau}a <a
			\end{equation*}
			which is a contradiction. This proves that $a=0.$\\
			Next we prove that $\lim\limits_{n\rightarrow+\infty}\dfrac{a_{n+1}}{a_{n}}=\dfrac{1}{1+\tau}$, for $p>2$ and $q<\dfrac{2(p-1)}{p}$\\
			By means of \eqref{eq413} we get
			\begin{equation}
			\frac{a_{n+1}}{a_{n}}\leq
			\frac{\lambda_{n}(1+\tau)(a_{n})^{-1}+(1+2\lambda_{n})(1+\tau(a_{n})^{p-1})}{(1+\lambda_{n})(1+\tau)+2\lambda_{n}(1+\tau(a_{n})^{p-1})a_{n}},
			\label{eq416}
			\end{equation}
			but
			\begin{equation*}
			\lambda_{n}(1+\tau)(a_{n})^{-1}=c_{1}(u_{m-1}^{n})^{-1}(u_{m}^{n})^{\frac{-2p+pq+2}{2-q}},
			\end{equation*} 
			where $c_{1}=\dfrac{\tau(1+\tau)}{4^{\frac{1}{2-q}}}.$\\
			And for $q<\dfrac{2p-2}{p}$ we have $\dfrac{-2p+pq+2}{2-q}<0$, then we obtain 
			\begin{equation}
			\lim_{n\rightarrow +\infty}\dfrac{a_{n+1}}{a_{n}}\leq \dfrac{1}{1+\tau}.
			\label{h}
			\end{equation} 
			In the other hand, using \eqref{eq414} we get
			\begin{equation*}
			\frac{a_{n+1}}{a_{n}}
			\geq
			\frac{(1+\tau)(\lambda_{n}-\tau h_{n}^{-q}(u_{m}^{n})^{q-p})a_{n}^{-1}+(1+2\lambda_{n})(1+\tau(a_{n})^{p-1})}{(1+\tau)(1+2\lambda_{n})+2\lambda_{n}(1+\tau(a_{n})^{p-1})a_{n}},
			\label{eq417}
			\end{equation*}
			but
			\begin{equation*}
			(\lambda_{n}-\tau h_{n}^{-q}(u_{m}^{n})^{q-p})a_{n}^{-1}=(u_{m-1}^{n})^{-1}\left(c_{1}\left(u_{m}^{n}\right)^{-p+2+\frac{2q-2}{2-q}}-c_{2}\left(u_{m}^{n}\right)^{q-p+1+\frac{-q(-q+1)}{2-q}}\right),
			\end{equation*} 
			where $c_{1},c_{2} \in \mathbb{R}.$\\
			And for $q<\dfrac{2(p-1)}{p}$ we have
			\begin{equation*}
			 \left(\lambda_{n}-\tau h_{n}^{-q}(u_{m}^{n})^{q-p}\right)a_{n}^{-1}\rightarrow 0 \text{ as } n\rightarrow +\infty,
			 \end{equation*}
			 then we obtain 
			\begin{equation}
			\lim_{n\rightarrow +\infty}\dfrac{a_{n+1}}{a_{n}}\geq \dfrac{1}{1+\tau}.
			\label{hh}
			\end{equation} 
			Finally from \eqref{h} and \eqref{hh} we deduce that
			\begin{equation*}
			\lim_{n\rightarrow +\infty}\dfrac{a_{n+1}}{a_{n}}= \dfrac{1}{1+\tau}<1.
			\end{equation*} 
			To finish the proof of Lemma 2.3 we shall prove that for all $p>2$ and $q<\dfrac{2(p-1)}{p}$ we have $\lim\limits_{n\rightarrow +\infty}\dfrac{u_{m}^{n+1}}{u_{m}^{n}}=1+\tau.$\\
			From \eqref{eq43}, we know that
			\begin{equation*}
			(1+2\lambda_{n})u_{m}^{n+1}-2\lambda_{n}u_{m-1}^{n+1}=(1+\tau_{n}(u_{m}^{n})^{p-1})u_{m}^{n},
			\end{equation*}
			which implies
			\begin{equation*}
			1+2\lambda_{n}-2\lambda_{n}\frac{u_{m-1}^{n+1}}{u_{m}^{n+1}}=(1+\tau_{n}(u_{m}^{n})^{p-1})\frac{u_{m}^{n}}{u_{m}^{n+1}}. 
			\end{equation*}
			Then
			\begin{equation*}
			1+2\lambda_{n}-2\lambda_{n}a_{n+1}=(1+\tau)\frac{u_{m}^{n}}{u_{m}^{n+1}}. 
			\end{equation*}
			So
			\begin{equation*}
			1=(1+\tau)\lim_{n\rightarrow +\infty}\frac{u_{m}^{n}}{u_{m}^{n+1}}.
			\end{equation*}
			This implies  $\lim\limits_{n\rightarrow +\infty}\dfrac{u_{m}^{n+1}}{u_{m}^{n}}=1+\tau>1.$
			This achieve the proof of Lemma 2.3.
		\end{proof}
		Now we can finish the proof of Theorem 2.2. We have showed that
		\begin{equation*} 
		u_{m-2}^{n} \leq \left(u_{m-2}^{0}+\sum_{k=0}^{n}{ B_{k}}\right)\prod_{k=0}^{n}{A_{k}},
		\end{equation*}
		where 
		\begin{equation*}
		A_{n}=\frac{\left(1+\tau_{n}u_{m-2}^{n}\right)}{1+\lambda_{n}}
		\end{equation*}
		and 
		\begin{equation*}
		B_{n}=\frac{\lambda_{n}^{2}(1+\tau_{n}u_{m}^{n})u_{m}^{n}+\lambda_{n}(1+2\lambda_{n})(1+\tau_{n}u_{m-1}^{n})u_{m-1}^{n}}{(1+\lambda_{n})(1+3\lambda_{n})}.
		\end{equation*} 
		Using that $u_{m}^{n}>>1$, we can see that for $p=2$ and $q=1$ we have 
		\begin{equation*}
		\tau_{n}=\dfrac{\tau}{u_{m}^{n}} \text{ and } h_{n}=h.
		\end{equation*}
		Then
		\begin{equation*}
		A_{n} \leq 1+\tau_{n}u_{m-2}^{n}= 1+\tau \frac{u_{m-2}^{n}}{u_{m}^{n}}\leq 1+\tau \frac{u_{m-1}^{n}}{u_{m}^{n}}=1+\tau a_{n},
		\end{equation*} 
		and
		\begin{align*}
		B_{n}& =\frac{\lambda_{n}^{2}(1+\tau_{n}u_{m}^{n})u_{m}^{n}+\lambda_{n}(1+2\lambda_{n})(1+\tau_{n}u_{m-1}^{n})u_{m-1}^{n}}{(1+\lambda_{n})(1+3\lambda_{n})} \\
		& \leq \lambda_{n}^{2}(1+\tau_{n}u_{m}^{n})u_{m}^{n}+\lambda_{n}(1+2\lambda_{n})(1+\tau_{n}u_{m-1}^{n})u_{m-1}^{n}\\
		& \leq \lambda_{n}^{2}(1+\tau)u_{m}^{n}+\lambda_{n}(1+2\lambda_{n})(1+\tau_{n}u_{m}^{n})u_{m-1}^{n}\\
		& = \frac{\tau^{2}}{h^{4}}(1+\tau)\frac{1}{u_{m}^{n}}+\frac{\tau}{h^{2}u_{m}^{n}}(1+2\frac{\tau}{h^{2}u_{m}^{n}})(1+\tau)u_{m-1}^{n}\\
		& \leq c^{2}(1+\tau)\frac{u_{m-1}^{n}}{u_{m}^{n}}+c(1+2c)(1+\tau)\frac{u_{m-1}^{n}}{u_{m}^{n}}\\
		&\leq c(1+\tau)\left(c+(1+2c)\right)\frac{u_{m-1}^{n}}{u_{m}^{n}}\\
		&\leq c(1+\tau)(1+3c)\frac{u_{m-1}^{n}}{u_{m}^{n}},
		\end{align*} 
		with $c=\dfrac{\tau}{h^{2}}.$\\
		But we have
		\begin{equation*}
		\lim_{n\rightarrow +\infty}\frac{a_{n+1}}{a_{n}}<1 \text{ and } a_{n}>0,
		\end{equation*}
		then
		\begin{equation*}
		0< \sum_{n\geq0}{a_{n}}< +\infty.
		\end{equation*}
		In the other hand, for all $c>0$, we have $\sum\limits_{n\geq0}{ca_{n}}< +\infty$, then
		\begin{equation*}
		1< \prod_{n\geq 0} (1+ca_{n})< +\infty. 
		\end{equation*}
		We deduce from this that
		\begin{equation*}
		0<\sum_{n\geq 0}{ B_{n}}\leq c(1+\tau)(1+3c)\sum_{n\geq0}{a_{n}}<+\infty,
		\end{equation*}
		and
		\begin{equation*}
		1<\prod_{n\geq0}{A_{n}}\leq \prod_{n\geq 0} (1+\tau a_{n})<+\infty,
		\end{equation*}
		which implies that 
		\begin{equation*}
		\lim_{n\rightarrow +\infty}u_{m-2}^{n}<+\infty.
		\end{equation*}
		Now we will prove the second result of Theorem 2.2, that is:
		\begin{equation*}
		\text{ If } p>2 \text{ and } q<\frac{2(p-1)}{p} \text{ then } \lim_{n\rightarrow +\infty} u_{m-1}^{n}< +\infty.
		\end{equation*}
		In \eqref{approchee}, we put $j=m-1$ and we consider the quantity
		\begin{align*}
		u_{m-1}^{n+1}-u_{m-1}^{n} & \leq \lambda_{n}(u_{m-2}^{n+1}-2u_{m-1}^{n+1}+u_{m}^{n+1})+\tau_{n}(u_{m-1}^{n})^{p}\\
		& =G_{n}+H_{n},
		\end{align*}
		where
		\begin{eqnarray*}
			G_{n}&=& \lambda_{n}(u_{m-2}^{n+1}-2u_{m-1}^{n+1}+u_{m}^{n+1})\\
			&=&c(u_{m}^{n})^{-p+1+\frac{2(q-1)}{2-q}}(u_{m-2}^{n+1}-2u_{m-1}^{n+1}+u_{m}^{n+1})\\
			&=&c(u_{m}^{n})^{-p+1+\frac{2(q-1)}{2-q}}u_{m}^{n+1}(\frac{u_{m-2}^{n+1}}{u_{m}^{n+1}}-2\frac{u_{m-1}^{n+1}}{u_{m}^{n+1}}+1)\\
			&=& c(u_{m}^{n})^{-p+2+\frac{2(q-1)}{2-q}}\frac{u_{m}^{n+1}}{u_{m}^{n}}(\frac{u_{m-2}^{n+1}}{u_{m-1}^{n+1}}a_{n+1}-2a_{n+1}+1)\\
			&=&c(u_{m}^{n})^{-p+2+\frac{2q-2}{2-q}}\frac{u_{m}^{n+1}}{u_{m}^{n}}(1-a_{n+1}(2-\frac{u_{m-2}^{n+1}}{u_{m-1}^{n+1}}))\\
			& >&0,
		\end{eqnarray*}
		with $c:=\dfrac{\tau}{2^{\frac{2}{2-q}}}$
		and 
		\begin{equation*}
		H_{n}=\tau_{n}(u_{m-1}^{n})^{p}=\tau u_{m}^{n}(a_{n})^{p}>0.
		\end{equation*}
		Therefore, using Lemma 2.3, we get
		\begin{equation*}
		\lim_{n\rightarrow +\infty}\dfrac{G_{n+1}}{G_{n}} =\left(1+\tau\right)^{-p+2+\frac{2q-2}{q-2}}<1 \text{ for } q<\dfrac{2(p-1)}{p},
		\end{equation*}
		which implies
		\begin{equation*}
		\sum_{n\geq0}{G_{n}}<+\infty.
		\end{equation*} 
		Also 
		\begin{equation*}
		\lim_{n\rightarrow +\infty}\frac{H_{n+1}}{H_{n}} =(1+\tau)^{-p+1}<1 \text{ for } p>2,
		\end{equation*}
		which implies
		\begin{equation*}
		\sum_{n\geq0}{H_{n}}<+\infty.
		\end{equation*} 
		Hence we get the boundedness of $u_{m-1}^{n}$ from:
		\begin{eqnarray*}
			0<u_{m-1}^{n}&=& \sum_{k=1}^{n}{(u_{m-1}^{k}-u_{m-1}^{k-1})}+u_{m-1}^{0} \\
			& \leq & \sum_{k=1}^{n}{(G_{k-1}+H_{k-1})}+u_{m-1}^{0} \\
			& \leq & \sum_{k=0}^{+\infty}{(G_{k}+H_{k})}+u_{m-1}^{0} \\
			& < & +\infty.
		\end{eqnarray*}
		Thus we have completed the proof of Theorem 2.2.
	\end{proof}
	\section{Convergence }
	\noindent In this section we prove the convergence of the numerical solution given by \eqref{approchee}, to the nodal values of the solution $u$ of \eqref{exacte} on each fixed interval time $[0,T],T<T^{*}$ as far as the smoothness of $u$ is guaranteed.
	\begin{lem}
		Let $u$ be the classical solution of \eqref{exacte} and $U^{n}$ be the numerical solution of \eqref{approchee}. Let $T$ be an arbitrary number such that $0<T<T^{*}$. Then there exist positive constants $C_{0},\ C_{1}$, depending only on $T$ and $u_{0}$, such that\\
		\textbf{(A)} For $p>2$ and $q<\dfrac{2(p-1)}{p}$
		\begin{equation*}
		\max_{1\leq j \leq m-2}\left|u_{j}^{n}-u(x_{j},t^{n})\right|\leq C_{0}h^{3-q}
		\end{equation*} 
		holds so far as $t_{n}<T.$\\
		\textbf{(B)} For $p>1$ and $q=1$
		\begin{equation*}
		\max_{1\leq j \leq m-1}\left|u_{j}^{n}-u(x_{j},t^{n})\right|\leq C_{1}h^{2}
		\end{equation*} 
		holds so far as $t_{n}<T.$
	\end{lem}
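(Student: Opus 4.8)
The plan is to run a classical consistency-plus-stability argument for the nodal error $e_j^n:=u_j^n-u(x_j,t^n)$, closed by a discrete maximum principle and a discrete Gronwall inequality, together with a bootstrap keeping the numerical values in the range where $u$ is smooth. First I would fix $T<T^{*}$ and record the a priori regularity: on $[-1,1]\times[0,T]$ the classical solution $u$ and the derivatives needed ($u_t,u_{xx},u_{xxx},u_{xt}$) are bounded by a constant $M=M(T,u_0)$, and by (A2)--(A3) $u$ is even with $u_x(0)=0$ and $u_x>0$ on $(-1,0)$. Since $u$ is bounded on $[0,T]$, for $h$ small the adaptive rules collapse to the uniform mesh $h_n=h$ while $\tau_n=\tau\min(1,\|U^n\|_\infty^{1-p})$ stays comparable to $\tau$; I would carry this as part of the induction, valid as long as $\|U^n\|_\infty\le 2M$, which the error bound itself will guarantee for $t_n<T$.

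Second, consistency. Substituting $v_j^n:=u(x_j,t^n)$ into \eqref{approchee} and Taylor expanding, the time difference and the implicit Laplacian each produce truncation of order $O(\tau_n)+O(h^2)$, the extra $O(\tau_n)$ coming from the mismatch between the $t^{n+1}$-Laplacian and the $t^{n}$-reaction, while $(v_j^n)^p$ is exact at $t^n$. The delicate term is the semi-implicit gradient nonlinearity: writing $\Delta_j^n:=v_{j+1}^n-v_{j-1}^n=2h\,u_x(x_j,t^n)+\tfrac{h^3}{3}u_{xxx}(x_j,t^n)+O(h^5)$ and similarly at $t^{n+1}$, I would expand $(2h)^{-q}|\Delta_j^n|^{q-1}|\Delta_j^{n+1}|$ and compare it with $|u_x(x_j,t^n)|^q$. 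The sign of $u_x$ is fixed for $x_j<0$, which is exactly why the estimate is stated only at nodes left of the centre.

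Third, stability. Subtracting the consistency identity from the scheme gives an equation for $e_j^n$ in which the implicit Laplacian and the implicit half of the gradient term, $(2h)^{-q}(a_{\mathrm{num}}^n)^{q-1}(e_{j+1}^{n+1}-e_{j-1}^{n+1})$, assemble into a tridiagonal operator. The key structural fact is that, for $h$ small (a diagonal-dominance condition in the spirit of $h<1/(1+\tau)$ used in Theorem 2.1), this operator is an M-matrix with all row sums equal to $1$, so the discrete maximum principle yields $\|e^{n+1}\|_\infty\le\|e^{n}\|_\infty+\tau_n\,(pM^{p-1})\|e^{n}\|_\infty+(\text{explicit gradient error})+\tau_n\|R^n\|_\infty$. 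The reaction term is controlled by local Lipschitz continuity of $s\mapsto s^p$ on $[0,2M]$; the explicit gradient error is handled through the Hölder/monotonicity properties of $s\mapsto s^{q-1}$ for $1\le q<2$ combined with the bootstrap hypothesis on $E^n=\|e^n\|_\infty$. A discrete Gronwall inequality over the steps with $t_n<T$ then closes the induction and produces $C_0$ (resp. $C_1$).

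The main obstacle is the gradient nonlinearity, on both the accuracy and the stability fronts. For the rate, the normalisation $(2h)^{-q}$ acting against the cubic Taylor remainder and the Hölder exponent $q-1\in[0,1)$ of $s\mapsto s^{q-1}$ are what degrade the clean $O(h^2)$ down to $O(h^{3-q})$; the degeneracy $u_x(0)=0$, so that $u_x(x_{m-2})=O(h)$, is what forces the cut-off at $j=m-2$, since closer to the centre the weight $|a^n|^{q-1}$ both vanishes and loses the sign and size control needed for the M-matrix property and the Lipschitz bounds. The case $q=1$ is markedly easier: the weight $|a^n|^{q-1}\equiv1$ removes the degeneracy and makes the gradient term linear in the implicit differences, so consistency is the usual $O(h^2)$ and the argument survives one node further, giving the bound up to $j=m-1$ in part \textbf{(B)}. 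Finally I would check that the bootstrap is self-consistent, i.e. that the resulting bound keeps $\|U^n\|_\infty\le 2M$ for $t_n<T$, which is what licenses the uniform-mesh reduction used throughout.
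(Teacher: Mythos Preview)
Your plan is essentially the paper's own argument: Taylor-expand to get consistency with truncation $O(\tau_n)+O(h_n^2)$, subtract to obtain an error equation, control the reaction via the mean-value theorem for $s\mapsto s^p$ and the gradient piece via the mean-value/H\"older structure of $s\mapsto|s|^{q}$, and close with a discrete Gronwall step. The mechanism you identify for the loss from $h^2$ to $h^{3-q}$---the $(2h)^{-q}$ weight interacting with the $(q-1)$-homogeneity of $g'(s)=q|s|^{q-1}$---is exactly what the paper exploits when it factors the recursion as $E^{n+1}\le(1+\tau_n h_n^{-q}B)E^n+\tau_n h_n^{3-q}C$ and then observes that $h_n^{-q}$ is bounded on $[0,T]$; your observation that $q=1$ kills this degradation and pushes the estimate one node further to $j=m-1$ also matches the paper's part \textbf{(B)}.

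Two presentational differences are worth flagging. First, you package stability through an M-matrix/maximum-principle statement for the implicit operator; the paper instead bounds each term of the error equation directly in $\ell^\infty$. Your route is cleaner and makes explicit the structural reason the scheme is stable (the choice of $h_n$ is precisely what guarantees diagonal dominance, i.e.\ $\lambda_n\ge \tau_n(2h_n)^{-q}|u_{j+1}^n-u_{j-1}^n|^{q-1}$), whereas the paper's computation is more opaque but avoids naming the M-matrix machinery. Second, you run an explicit bootstrap $\|U^n\|_\infty\le 2M$ to freeze $h_n=h$ and keep $\tau_n$ comparable to $\tau$; the paper leaves this implicit and instead invokes the adaptive formula $h_n=(2\|U^n\|_\infty^{1-q})^{1/(2-q)}$ together with Theorem~2.2 to bound $h_n^{-q}$. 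Your version is more self-contained for $t_n<T$. Neither difference changes the substance of the proof.
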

	\noindent Before studying local convergence, we prove the consistency of the scheme.
	\subsection{Consistency}
	For all $1\leq j \leq N_{n},$ we define
	\begin{eqnarray*}
		\epsilon_{j}^{n}& =&\frac{u(x_{j},t^{n+1})-u(x_{j},t^{n})}{\tau_{n}}-\frac{u(x_{j+1},t^{n+1})-2u(x_{j},t^{n+1})+u(x_{j-1},t^{n+1})}{h_{n}^{2}} \\ &-&\left(u(x_{j},t^{n})\right)^{p}+\left|\frac{u(x_{j+1},t^{n})-u(x_{j-1},t^{n})}{2h_{n}}\right|^{q-1}\left|\frac{u(x_{j+1},t^{n+1})-u(x_{j-1},t^{n+1}}{2h_{n}}\right|.
	\end{eqnarray*}
	We use Taylor formula, we obtain
	\begin{eqnarray}
		\frac{\partial u}{\partial t}(x_{j},t^{n})&=&\frac{u(x_{j},t^{n+1})-u(x_{j},t^{n})}{\tau_{n}}-\frac{\tau_{n}}{2}\frac{\partial^{2} u}{\partial t^{2}}(x_{j},t^{n}+\tau_{n}\theta_{1}).
		\label{etoi}\\
	\frac{\partial u}{\partial x}(x_{j},t^{n})&=&\frac{u(x_{j+1},t^{n})-u(x_{j-1},t^{n})}{2h_{n}}-\frac{h_{n}^{2}}{3}\frac{\partial^{3} u}{\partial x^{3}}(x_{j}+h_{n}\theta_{2},t^{n})\nonumber \\
	&&\ \ \ -\frac{h_{n}^{2}}{3}\frac{\partial^{3} u}{\partial x^{3}}(x_{j}-h_{n}\theta_{3},t^{n}).
	\label{E}\\
	\frac{\partial^{2} u}{\partial x^{2}}(x_{j},t^{n})&=&\frac{u(x_{j+1},t^{n})-2u(x_{j},t^{n})+u(x_{j-1},t^{n})}{h_{n}^{2}}-\frac{h_{n}^{2}}{24}\frac{\partial^{4} u}{\partial x^{4}}(x_{j}+h_{n}\theta_{4},t^{n}) \nonumber \\
	&&\ \ \ -\frac{h_{n}^{2}}{24}\frac{\partial^{4} u}{\partial x^{4}}(x_{j}-h_{n}\theta_{5},t^{n}).\nonumber \\
	\nonumber \frac{\partial^{2} u}{\partial x^{2}}(x_{j},t^{n})&=&\frac{\partial^{2} u}{\partial x^{2}}(x_{j},t^{n+1})-\tau_{n}\frac{\partial^{3} u}{\partial t \partial x^{2}}(x_{j},t^{n}+\tau_{n}\theta_{6})\\
	\nonumber &=&\frac{u(x_{j+1},t^{n+1})-2u(x_{j},t^{n+1})+u(x_{j-1},t^{n+1})}{h_{n}^{2}}+\frac{h_{n}^{2}}{24}\frac{\partial^{4} u}{\partial x^{4}}(x_{j}+h_{n}\theta_{7},t^{n+1})\\
	&&+\frac{h_{n}^{2}}{24}\frac{\partial^{4} u}{\partial x^{4}}(x_{j}-h_{n}\theta_{8},t^{n+1})+\tau_{n}\frac{\partial^{3} u}{\partial t \partial x^{2}}(x_{j},t^{n}+\tau_{n}\theta_{6}).
	\label{et}
	\end{eqnarray}
	\noindent where $0<\theta_{i}<1$ for $i=1,...,8.$\\
	We define
	\begin{equation*}
	F=\left|\frac{\partial u}{\partial x}(x_{j},t^{n})\right|^{q}-\left|\frac{u(x_{j+1},t^{n})-u(x_{j-1},t^{n})}{2h_{n}}\right|^{q}.
	\end{equation*}
	We use the mean value theorem, the monotony and the symmetry of the exact solution proved in Theorem 2.3 and Theorem 2.4 in \cite{hani}, then there exists $A$ between $\dfrac{\partial u}{\partial x}(x_{j},t^{n})$ and\\ $\dfrac{u(x_{j+1},t^{n})-u(x_{j-1},t^{n})}{2h_{n}}$ such that
	\begin{eqnarray*}
		&&\left| \left|\frac{\partial u}{\partial x}(x_{j},t^{n})\right|^{q}-\left|\frac{u(x_{j+1},t^{n})-u(x_{j-1},t^{n})}{2h_{n}}\right|^{q}\right| \\
		&=&\left| q\left|A\right|^{q-1}\left(\frac{\partial u}{\partial x}(x_{j},t^{n})-\frac{u(x_{j+1},t^{n})-u(x_{j-1},t^{n})}{2h_{n}}\right)\right| \\
		&=& q\left|A\right|^{q-1} o(h_{n}^{2}),
	\end{eqnarray*}
	with 
	\begin{equation*}
	\left|\frac{\partial u}{\partial x}(x_{j},t^{n})-A\right|\leq \left|\frac{\partial u}{\partial x}(x_{j},t^{n})-\frac{u(x_{j+1},t^{n})-u(x_{j-1},t^{n})}{2h_{n}}\right| \leq o(h_{n}^{2}).
	\end{equation*}
	Since $\left|\dfrac{\partial u}{\partial x}\right|$ is bounded before blow up by \cite{chipotweissler}, then we can deduce that $A$ is bounded too. So we can write that
	\begin{eqnarray}
	\left|\frac{\partial u}{\partial x}(x_{j},t^{n})\right|^{q}&=&\left|\frac{u(x_{j+1},t^{n})-u(x_{j-1},t^{n})}{2h_{n}}\right|^{q}+o(h_{n}^{2})\label{starstar}\\
	\nonumber  &=&\left|\frac{u(x_{j+1},t^{n})-u(x_{j-1},t^{n})}{2h_{n}}\right|^{q-1}\left|\frac{u(x_{j+1},t^{n})-u(x_{j-1},t^{n})}{2h_{n}}\right|+o(h_{n}^{2})\\
	\nonumber &=&\left|\frac{u(x_{j+1},t^{n})-u(x_{j-1},t^{n})}{2h_{n}}\right|^{q-1}\left|\frac{\partial u}{\partial x}(x_{j},t^{n})+o(h_{n}^{2})\right|+o(h_{n}^{2})\\
	\nonumber &=&\left|\frac{u(x_{j+1},t^{n})-u(x_{j-1},t^{n})}{2h_{n}}\right|^{q-1}\left|\frac{\partial u}{\partial x}(x_{j},t^{n})\right|+o(h_{n}^{2})\\
	\nonumber &=&\left|\frac{u(x_{j+1},t^{n})-u(x_{j-1},t^{n})}{2h_{n}}\right|^{q-1}\left|\frac{\partial u}{\partial x}(x_{j},t^{n+1})\right|+o(\tau_{n})+o(h_{n}^{2}).
	\end{eqnarray}
	Then
	\begin{eqnarray}
	\left|\frac{\partial u}{\partial x}(x_{j},t^{n})\right|^{q}&=&\left|\frac{u(x_{j+1},t^{n})-u(x_{j-1},t^{n})}{2h_{n}}\right|^{q-1}\left|\frac{u(x_{j+1},t^{n+1})-u(x_{j-1},t^{n+1})}{2h_{n}}\right| \nonumber \\
	&&\ \ \ \ +o(\tau_{n})+o(h_{n}^{2}).
	\label{etoietoi}
	\end{eqnarray}
	We replace \eqref{etoi}, \eqref{et} and \eqref{etoietoi} in $\epsilon_{j}^{n}$ we obtain
	\begin{eqnarray*}
		\epsilon_{j}^{n}&=&\frac{\partial u}{\partial t}(x_{j},t^{n})+\frac{\tau_{n}}{2}\frac{\partial^{2} u}{\partial t^{2}}(x_{j},t^{n}+\tau_{n}\theta_{1})-
		\frac{\partial^{2} u}{\partial x^{2}}(x_{j},t^{n})-\tau_{n}\frac{\partial^{3} u}{\partial t \partial x^{2}}(x_{j},t^{n}+\tau_{n}\theta_{4})\\
		&&\ \ \ \ -\frac{h_{n}^{2}}{24}\frac{\partial^{4} u}{\partial x^{4}}(x_{j}+h_{n}\theta_{5},t^{n+1})-\frac{h_{n}^{2}}{24}\frac{\partial^{4} u}{\partial x^{4}}(x_{j}-h_{n}\theta_{6},t^{n+1})-\left(u(x_{j},t^{n})\right)^{p}\\
		&&\ \ \ \ \ \ +\left|\frac{\partial u}{\partial x}(x_{j},t^{n})\right|^{q}+o(\tau_{n})+o(h_{n}^{2}).
	\end{eqnarray*}
	If we put
	\begin{equation*}
	R_{1}=\max_{x,t}\left|\frac{1}{2}\frac{\partial^{2} u}{\partial t^{2}}(x,t)+\frac{\partial^{3} u}{\partial t \partial x^{2}}(x,t)\right| \text{ and } 
	R_{2}=\frac{1}{12}\max_{x,t}\left|\frac{\partial^{4} u}{\partial x^{4}}(x,t)\right|,
	\end{equation*}
	we can deduce that 
	\begin{equation*}
	\max_{1\leq j \leq N_{n}}\epsilon_{j}^{n}\leq C_{1}\tau_{n}+C_{2}h_{n}^{2},
	\end{equation*}
	with $C_{1}\tau_{n}=R_{1}\tau_{n}+o(\tau_{n})$ and $C_{2}h_{n}^{2}=R_{2}h_{n}^{2}+o(h_{n}^{2}).$
	\subsection{Local convergence} 
	Let $e_{j}^{n}=u_{j}^{n}-u(x_{j},t^{n})$ for $j=1,...,m-2.$\\
	\textbf{(A):} Using \eqref{etoi}, \eqref{et} and \ref{starstar} we get
	\begin{eqnarray*}
		&&\frac{u(x_{j},t^{n+1})-u(x_{j},t^{n})}{\tau_{n}}-\frac{u(x_{j+1},t^{n+1})-2u(x_{j},t^{n+1})+u(x_{j-1},t^{n+1})}{h_{n}^{2}}
		-\left(u(x_{j},t^{n})\right)^{p}\\
		&&\ \ \ \ \ \ \  +\left|\frac{u(x_{j+1},t^{n})-u(x_{j-1},t^{n})}{2h_{n}}\right|^{q}\\
		&=&\frac{\tau_{n}}{2}\frac{\partial^{2}u}{\partial t^{2}}(x_{j},t^{n}+\theta_{1}\tau_{n})-\tau_{n}\frac{\partial^{3} u}{\partial t \partial x^{2}}(x_{j},t^{n}+\theta_{6}\tau_{n})\\
		&&\ \ \ \ \ \ \ -\frac{h_{n}^{2}}{24}\left\{\frac{\partial^{4}u}{\partial x^{4}}(x_{j}+\theta_{7}h_{n},t^{n+1})+\frac{\partial^{4}u}{\partial x^{4}}(x_{j}-\theta_{8}h_{n},t^{n+1})\right\}+o(h_{n}^{2}).
	\end{eqnarray*}
	Let 
	\begin{eqnarray*}
		r_{j}^{n}&:=&-\frac{\tau_{n}}{2}\frac{\partial^{2}u}{\partial t^{2}}(x_{j},t^{n}+\theta_{1}\tau_{n})+\tau_{n}\frac{\partial^{3} u}{\partial t \partial x^{2}}(x_{j},t^{n}+\theta_{6}\tau_{n})\\
		&&\ \ \ \ \ +\frac{h_{n}^{2}}{24}\left\{\frac{\partial^{4}u}{\partial x^{4}}(x_{j}+\theta_{7}h_{n},t^{n+1})+\frac{\partial^{4}u}{\partial x^{4}}(x_{j}-\theta_{8}h_{n},t^{n+1})\right\}+o(h_{n}^{2})
	\end{eqnarray*}
	Then 	
	\begin{eqnarray}
	&&\frac{u(x_{j},t^{n+1})-u(x_{j},t^{n})}{\tau_{n}}-\frac{u(x_{j+1},t^{n+1})-2u(x_{j},t^{n+1})+u(x_{j-1},t^{n+1})}{h_{n}^{2}}-\left(u(x_{j},t^{n})\right)^{p} \nonumber \\
	&&\ \ \ +\left|\frac{u(x_{j+1},t^{n})-u(x_{j-1},t^{n})}{2h_{n}}\right|^{q}=-r_{j}^{n}.
	\label{C}
	\end{eqnarray}
	Using \eqref{approchee}, we have 
	\begin{equation}
	\frac{u_{j}^{n+1}-u_{j}^{n}}{\tau_{n}}-\frac{u_{j+1}^{n+1}-2u_{j}^{n+1}+u_{j-1}^{n+1}}{h_{n}^{2}}-(u_{j}^{n})^{p}+\frac{1}{(2h_{n})^{q}}|u_{j+1}^{n}-u_{j-1}^{n}|^{q-1}|u_{j+1}^{n+1}-u_{j-1}^{n+1}|=0.
	\label{D}
	\end{equation}
	From \eqref{C} and \eqref{D}, $e_{j}^{n}$ satisfies
	\begin{eqnarray*}
		&&\frac{e_{j}^{n+1}-e_{j}^{n}}{\tau_{n}}-\frac{e_{j+1}^{n+1}-2e_{j}^{n+1}+e_{j-1}^{n+1}}{h_{n}^{2}}-\left((u_{j}^{n})^{p}-u(x_{j},t^{n})^{p}\right)\\
		&&\ \ \ \ \ +\frac{1}{(2h_{n})^{q}}|u_{j+1}^{n}-u_{j-1}^{n}|^{q-1}|u_{j+1}^{n+1}-u_{j-1}^{n+1}|
		-\left|\frac{u(x_{j+1},t^{n})-u(x_{j-1},t^{n})}{2h_{n}}\right|^{q}\\
		&=& r_{j}^{n}.
	\end{eqnarray*}
	By the mean-value Theorem, for $f(X)=X^{p}$, we get
	\begin{eqnarray*}
		(u_{j}^{n})^{p}-(u(x_{j},t^{n}))^{p}&=&f(u_{j}^{n})-f(u(x_{j},t^{n})\\
		&=&f'(u(x_{j},t^{n})+\theta_{9}e_{j}^{n})e_{j}^{n},
	\end{eqnarray*}
	for some $\theta_{9}\in [0,1]$. Then we obtain
	\begin{eqnarray*}
		&&\frac{e_{j}^{n+1}-e_{j}^{n}}{\tau_{n}}-\frac{e_{j+1}^{n+1}-2e_{j}^{n+1}+e_{j-1}^{n+1}}{h_{n}^{2}}\\
		&=&f'(u(x_{j},t^{n})+\theta_{9}e_{j}^{n})e_{j}^{n}-\frac{1}{(2h_{n})^{q}}|u_{j+1}^{n}-u_{j-1}^{n}|^{q-1}|u_{j+1}^{n+1}-u_{j-1}^{n+1}|\\
		&&\ \ \ \ \ +\left|\frac{u(x_{j+1},t^{n})-u(x_{j-1},t^{n})}{2h_{n}}\right|^{q}+r_{j}^{n}.\\
	\end{eqnarray*}
	Using \eqref{starstar} we get
	\begin{eqnarray*}
		&&\frac{e_{j}^{n+1}-e_{j}^{n}}{\tau_{n}}-\frac{e_{j+1}^{n+1}-2e_{j}^{n+1}+e_{j-1}^{n+1}}{h_{n}^{2}}\\
		&=&f'(u(x_{j},t^{n})+\theta_{5}e_{j}^{n})e_{j}^{n}-\frac{1}{(2h_{n})^{q}}|u_{j+1}^{n}-u_{j-1}^{n}|^{q-1}|u_{j+1}^{n+1}-u_{j-1}^{n+1}|+\left|\frac{\partial u}{\partial x}(x_{j},t^{n})\right|^{q}+r_{1j}^{n}.
	\end{eqnarray*}
	with $r_{1j}^{n}=r_{j}^{n}+o(h_{n}^{2}).$\\
	Let:
	\begin{align*}
	E^{n}&=\max_{1\leq j \leq m-2}\left|e_{j}^{n}\right|, & U=&\max_{x,t}\left|u(x,t)\right|,	& V&=\max_{x,t}\left|\frac{\partial u}{\partial x}(x,t)\right|, \\ W&=\frac{2}{3}\max_{x,t}\left|\frac{\partial^{3} u}{\partial x^{3}}(x,t)\right|,
	& K&=f'(U+1),
	\end{align*}
	and
	\begin{equation*}
	R=\frac{\lambda_{n}}{2}\max_{x,t}\left|\frac{\partial^{2}u}{\partial t^{2}}(x,t)\right|+\lambda_{n}\max_{x,t}\left|\frac{\partial^{2}u}{\partial x^{2}}(x,t)\right|+\frac{1}{12}\max_{x,t}\left|\frac{\partial^{4}u}{\partial x^{4}}(x,t)\right|+o(1)+o(\lambda_{n}).
	\end{equation*}
	But from \eqref{E} we have
	\begin{eqnarray}
	\nonumber &&\left|\frac{\partial u}{\partial x}(x,t)-\frac{u_{j+1}^{n}-u_{j-1}^{n}}{2h_{n}}\right|\\
	\nonumber &=&\left|\frac{u(x_{j+1},t^{n})-u(x_{j-1},t^{n})}{2h_{n}}-\frac{h_{n}^{2}}{3}\frac{\partial^{3}u}{\partial x^{3}}(x_{j}+\theta h_{n},t^{n})+\frac{h_{n}^{2}}{3}\frac{\partial^{3}u}{\partial x^{3}}(x_{j}-\theta h_{n},t^{n}) -\frac{u_{j+1}^{n}-u_{j-1}^{n}}{2h_{n}}\right|\\
	\nonumber &=&\left|\frac{e_{j-1}^{n}-e_{j+1}^{n}}{2h_{n}}-\frac{h_{n}^{2}}{3}\frac{\partial^{3}u}{\partial x^{3}}(x_{j}+\theta h_{n},t^{n})-\frac{h_{n}^{2}}{3}\frac{\partial^{3}u}{\partial x^{3}}(x_{j}-\theta h_{n},t^{n})\right|\\
	&\leq& \frac{E^{n}}{h_{n}}+h_{n}^{2}W.
	\label{F}
	\end{eqnarray}
	Then by \eqref{F} and the mean value theorem, for $g(X)=\left|X\right|^{q}$, we get
	\begin{eqnarray}
	\nonumber &&\left|\left|\frac{\partial u}{\partial x}(x_{j},t^{n})\right|^{q}-\left|\frac{u_{j+1}^{n}-u_{j-1}^{n}}{2h_{n}}\right|^{q}\right|\\
	&\leq&qg'\left(\frac{\partial u}{\partial x}(x_{j},t^{n})+\theta\left(\frac{E^{n}}{h_{n}}+h_{n}^{2}W\right)\right)\left(\frac{E^{n}}{h_{n}}+h_{n}^{2}W\right).
	\label{H}
	\end{eqnarray}
	In the other hand, for $1\leq j\leq m-2$ we have,
	\begin{eqnarray}
	\nonumber &&\left|\left|\frac{u_{j+1}^{n}-u_{j-1}^{n}}{2h_{n}}\right|^{q}-\left|\frac{u_{j+1}^{n}-u_{j-1}^{n}}{2h_{n}}\right|^{q-1}\left|\frac{u_{j+1}^{n+1}-u_{j-1}^{n+1}}{2h_{n}}\right|\right|\\
	\nonumber &=&\left|\frac{u_{j+1}^{n}-u_{j-1}^{n}}{2h_{n}}\right|^{q-1}\left(\frac{u_{j+1}^{n}-u_{j-1}^{n}}{2h_{n}}-\frac{u_{j+1}^{n+1}-u_{j-1}^{n+1}}{2h_{n}}\right)\\
	\nonumber &\leq&\left|\frac{u_{j+1}^{n}-u_{j-1}^{n}}{2h_{n}}\right|^{q-1}\left(\frac{e_{j+1}^{n}-e_{j-1}^{n}}{2h_{n}}-\frac{e_{j+1}^{n+1}-e_{j-1}^{n+1}}{2h_{n}}+o(\tau_{n})+o(h_{n}^{2})\right)\\
	\nonumber &\leq&\left|\frac{u_{j+1}^{n}-u_{j-1}^{n}}{2h_{n}}\right|^{q-1}\left(\frac{e_{j+1}^{n}-e_{j-1}^{n}}{2h_{n}}\right)+\left|\frac{u_{j+1}^{n}-u_{j-1}^{n}}{2h_{n}}\right|^{q-1}\left(\frac{e_{j+1}^{n+1}-e_{j-1}^{n+1}}{2h_{n}}\right)\\
	&&\ \ \ \ \ \ +\left(o(\tau_{n})+o(h_{n}^{2})\right)\left|\frac{u_{j+1}^{n}-u_{j-1}^{n}}{2h_{n}}\right|^{q-1}.
	\label{G}
	\end{eqnarray}
	Then from \eqref{G} and \eqref{H} we get
	\begin{eqnarray*}
		&&\frac{e_{j}^{n+1}-e_{j}^{n}}{\tau_{n}}-\frac{e_{j+1}^{n+1}-2e_{j}^{n+1}+e_{j-1}^{n+1}}{h_{n}^{2}}\\
		&\leq&f'(u(x_{j},t^{n})+\theta_{9}e_{j}^{n})e_{j}^{n}+r_{1j}^{n}+\left(o(\tau_{n})+o(h_{n}^{2})\right)\left|\frac{u_{j+1}^{n}-u_{j-1}^{n}}{2h_{n}}\right|^{q-1}\\
		&+&qg'\left(\frac{\partial u}{\partial x}(x_{j},t^{n})+\theta\left(\frac{E^{n}}{h_{n}}+h_{n}^{2}W\right)\right)\left(\frac{E^{n}}{h_{n}}+h_{n}^{2}W\right)+\left|\frac{u_{j+1}^{n}-u_{j-1}^{n}}{2h_{n}}\right|^{q-1}\left(\frac{e_{j+1}^{n}-e_{j-1}^{n}}{2h_{n}}\right)\\
		&&\ \ \  \ +\left|\frac{u_{j+1}^{n}-u_{j-1}^{n}}{2h_{n}}\right|^{q-1}\left(\frac{e_{j+1}^{n+1}-e_{j-1}^{n+1}}{2h_{n}}\right).
	\end{eqnarray*}
	Let $M:=\left\|U^{n}\right\|_{\infty}=u_{m}^{n}.$
	Finally we obtain
	\begin{eqnarray*}
		\frac{E^{n+1}-E^{n}}{\tau_{n}}&\leq& KE^{n}+h_{n}^{2}R+\left(o(\tau_{n})+o(h_{n}^{2})\right)\left(\frac{M}{h_{n}}\right)^{q-1}\\
		&+&qg'\left(V+\theta\left(\frac{E^{n}}{h_{n}}+h_{n}^{2}W\right)\right)\left(\frac{E^{n}}{h_{n}}+h_{n}^{2}W\right)\\
		&=&E^{n}\left(K+\frac{q}{h_{n}}g'\left(V+\theta\left(\frac{E^{n}}{h_{n}}+h_{n}^{2}W\right)\right)\right)\\
		&+&h_{n}^{2}\left(R+\left(o(\lambda_{n})+o(1)\right)\left(\frac{M}{h_{n}}\right)^{q-1}+Wqg'\left(V+\theta\left(\frac{E^{n}}{h_{n}}+h_{n}^{2}W\right)\right)\right)\\
		&=&\frac{E^{n}}{h_{n}^{q}}\left(h_{n}^{q}K+qg'\left(h_{n}V+\theta\left(E^{n}+h_{n}^{3}W\right)\right)\right)\\
		&+&h_{n}^{3-q}\left(h_{n}^{q-1}R+\left(o(\lambda_{n})+o(1)\right)M^{q-1}+Wqg'\left(h_{n}V+\theta\left(E^{n}+h_{n}^{3}W\right)\right)\right).
	\end{eqnarray*}
	Let 
	\begin{eqnarray*}
		B&=&h_{n}^{q}K+qg'\left(h_{n}V+\theta\left(E^{n}+h_{n}^{3}W\right)\right)\\
		C&=&h_{n}^{q-1}R+\left(o(\lambda_{n})+o(1)\right)M^{q-1}+Wqg'\left(h_{n}V+\theta\left(E^{n}+h_{n}^{3}W\right)\right).
	\end{eqnarray*}
	Then,
	\begin{eqnarray*}
		E^{n+1}&\leq&\left(1+\tau_{n}\frac{B}{h_{n}^{q}}\right)E^{n}+\tau_{n}h_{n}^{3-q}C\\
		&\leq&\left(1+\tau_{n}NB\right)E^{n}+\tau_{n}h_{n}^{3-q}C\\
		&\leq&\exp(NBT)h_{n}^{3-q}CT.
	\end{eqnarray*}
	With $N$ is constant such that
	\begin{equation*}
	\text{For } \  t_{n}<T \text{  and } h_{n}=\left(2M^{-q+1}\right)^{\frac{1}{2-q}} \text{  we have: } \ \ \dfrac{1}{h_{n}^{q}}=\dfrac{M^{\frac{q(q-1)}{2-q}}}{2^{\frac{q}{2-q}}}:=N,
	\end{equation*}
	which is bounded by Theorem 2.2.
	Then we get
	\begin{equation*}
	\max_{1\leq j \leq m-2}\left|u_{j}^{n}-u(x_{j},t^{n})\right|\leq C_{0}(T)h^{3-q}.\\
	\end{equation*}
	Now, we will prove the last part of the lemma.\\
	\textbf{(B):} We do the same thing for $p>1$ and $q=1$, we get for $j=1,...,m-1$
	\begin{eqnarray*}
		&&\frac{e_{j}^{n+1}-e_{j}^{n}}{\tau_{n}}-\frac{e_{j+1}^{n+1}-2e_{j}^{n+1}+e_{j-1}^{n+1}}{h_{n}^{2}}\\
		&=&f'(u(x_{j},t^{n})+\theta_{9}e_{j}^{n})e_{j}^{n}-\frac{u_{j+1}^{n+1}-u_{j-1}^{n+1}}{2h_{n}}+\frac{u(x_{j+1},t^{n})-u(x_{j-1},t^{n})}{2h_{n}}+r_{j}^{n}\\
		&=&f'(u(x_{j},t^{n})+\theta_{9}e_{j}^{n})e_{j}^{n}+\frac{e_{j-1}^{n+1}-e_{j+1}^{n+1}}{2h_{n}}+r_{j}^{n}.
	\end{eqnarray*}
	And then
	\begin{eqnarray*}
		&&\frac{E^{n+1}-E^{n}}{\tau_{n}}\leq KE^{n}+h_{n}^{2}R.\\
		&\Rightarrow& E^{n+1}\leq \tau_{n}KE^{n}+\tau_{n}h_{n}^{2}R.\\
		&\Rightarrow& E^{n+1}\leq \exp(KT)h_{n}^{2}RT.
	\end{eqnarray*}
	And finally we obtain
	\begin{equation*}
	\max_{1\leq j\leq m-1}\left|u_{j}^{n}-u(x_{j},t^{n})\right|\leq C_{1}(T)h^{2}.
	\end{equation*}
	\section{Approximation of the blowing up time}
	In this section, we give an idea about the numerical blow-up time. First of all we recall a result of Souplet and Weissler \cite{soupletweissler}
	\begin{th1}
		Let $\psi \in W^{1,s}_{0}(\Omega)$, ($s$ large enough), with $\psi\geq 0$ and $\psi \neq 0.$
		\begin{enumerate}
			\item There exists some $\lambda_{0}=\lambda_{0}(\psi)>0$ such that for all $ \lambda>\lambda_{0}$, the solution of \eqref{exacte} with initial data $\phi=\lambda \psi$ blows up in finite time in $W^{1,s}$ norm.
			\item There is some $C>0$ such that 
			\begin{equation*}
			T^{*}(\lambda \psi)\leq \frac{C}{(\lambda \left| \psi \right|_{\infty})^{p-1}},\ \ \ \lambda\rightarrow \infty.
			\end{equation*}
			\item \begin{equation*}
			T^{*}(\lambda \psi)\geq \frac{1}{(p-1)(\lambda \left| \psi \right|_{\infty})^{p-1}}.
			\end{equation*}
		\end{enumerate}
	\end{th1}
	We define now 
	\begin{equation}
	T_{num}^{*}:=\sum_{n\geq 0}{\tau_{n}}
	\label{timeblowup}
	\end{equation}
	and call it the numerical blow-up time.
	In \cite{hani}, we have proved that 
	\begin{equation*}
	u_{m}^{n}\geq \left(\dfrac{1+\tau}{1+\tau 2^{\frac{-q}{2-q}}\left(u_{m}^{0}\right)^{\frac{-2p+q(1+p)}{2-q}}}\right)^{n}u_{m}^{0}.  
	\end{equation*}
	which implies that
	\begin{equation}
	\dfrac{1}{(u_{m}^{n})^{p-1}}\leq \dfrac{1}{\left(\dfrac{1+\tau}{1+\tau 2^{\frac{-q}{2-q}}(u_{m}^{0})^{\frac{-2p+q(1+p)}{2-q}}}\right)^{n(p-1)}}(u_{m}^{0})^{-p+1}.
	\label{time}
	\end{equation}
	Using \eqref{timeblowup} and \eqref{time} we get
	\begin{eqnarray}
	T_{num}^{*}&=&\tau \sum_{n\geq 0}\dfrac{1}{(u_{m}^{n})^{p-1}} \nonumber \\
	&\leq& \dfrac{\tau}{(u_{m}^{0})^{p-1}}\sum_{n\geq 0}\left(\dfrac{1}{\left(\frac{1+\tau}{1+\tau 2^{\frac{-q}{2-q}}\left(u_{m}^{0}\right)^{\frac{-2p+q(1+p)}{2-q}}}\right)^{p-1}}\right)^{n} \nonumber \\
	&=&\dfrac{\tau}{(u_{m}^{0})^{p-1}}\sum_{n\geq 0}\left(\left(\dfrac{1+\tau 2^{\frac{-q}{2-q}}\left(u_{m}^{0}\right)^{\frac{-2p+q(1+p)}{2-q}}}{1+\tau}\right)^{p-1}\right)^{n} \nonumber \\
	&=&\dfrac{\tau}{(u_{m}^{0})^{p-1}}\dfrac{1}{1-\left(\dfrac{1+\tau 2^{\frac{-q}{2-q}}\left(u_{m}^{0}\right)^{\frac{-2p+q(1+p)}{2-q}}}{1+\tau}\right)^{p-1}}:=T^{**}
	\label{tstar}
	\end{eqnarray}
	\section{Numerical simulations}
	In this section, we present some numerical simulations that illustrate our results. In figure 1, we take $p=4>2$ and $q=1.3<\frac{2(p-1)}{p}$, one can see that the solution is bounded in $x_{m-1}$. Then we take $p=2$ and $q=1$, it is clear from figure 2 that the solution blows up in $x_{m-1},$ and from figure 3, we can see that the solution is bounded in $x_{m-2}.$\\
	Concerning the approximation of the blowing up time, if we take the initial data $u_{0}(x)=\lambda \sin(\frac{\pi}{2}(x+1))$, with $\lambda>0$ then $\left\|u_{0}\right\|_{\infty}=\lambda$. Theoretically we know that 
	\begin{equation*}
	T^{*}\geq \dfrac{1}{(p-1)\left\|u_{0}\right\|_{\infty}^{p-1}}.
	\end{equation*}
	Let $g(\lambda)=\dfrac{1}{(p-1)\lambda^{p-1}}$ and $p=3$. In the next table, and for some values of $\lambda$ we can see that $T^{*}_{num}\geq g(\lambda)$ which is compatible with the theoretical result, this is illustrated in figure 4.
	 Also, using \eqref{tstar} and for $\lambda=10^{3}$ we have $$T^{*}_{num}\approx 5.067.10^{-7}\leq T^{**}=5.075.10^{-7}.$$ 
	\begin{table}[H]
		\centering
		\begin{tabular}{|*{6}{c|}}
			\hline
			$\lambda$& $10$&$10^{2}$&$10^{3}$&$10^{4}$&$10^{5}$ \rule[-7pt]{0pt}{20pt}\\ \hline
			$g(\lambda)$&$5.10^{-3}$&$5.10^{-5}$&$5.10^{-7}$&$5.10^{-9}$&$5.10^{-11}$ \rule[-7pt]{0pt}{20pt}\\ \hline
			$T^{*}_{num}$&$5.177.10^{-3}$&$5.068.10^{-5}$&$5.067.10^{-7}$&$5.075.10^{-9}$&$5.058.10^{-11}$ \rule[-7pt]{0pt}{20pt}\\ \hline
		\end{tabular}
		\vspace{0.5cm}
			\caption{Comparison of the function $g(\lambda)$ with the numerical blow up time $T^{*}_{num}$.}
			\label{tab:1}       %
	\end{table}	
	\begin{figure}[H]
		\begin{minipage}[b]{0.40\linewidth}
			\centering
			\includegraphics[width=3in,height=2in]{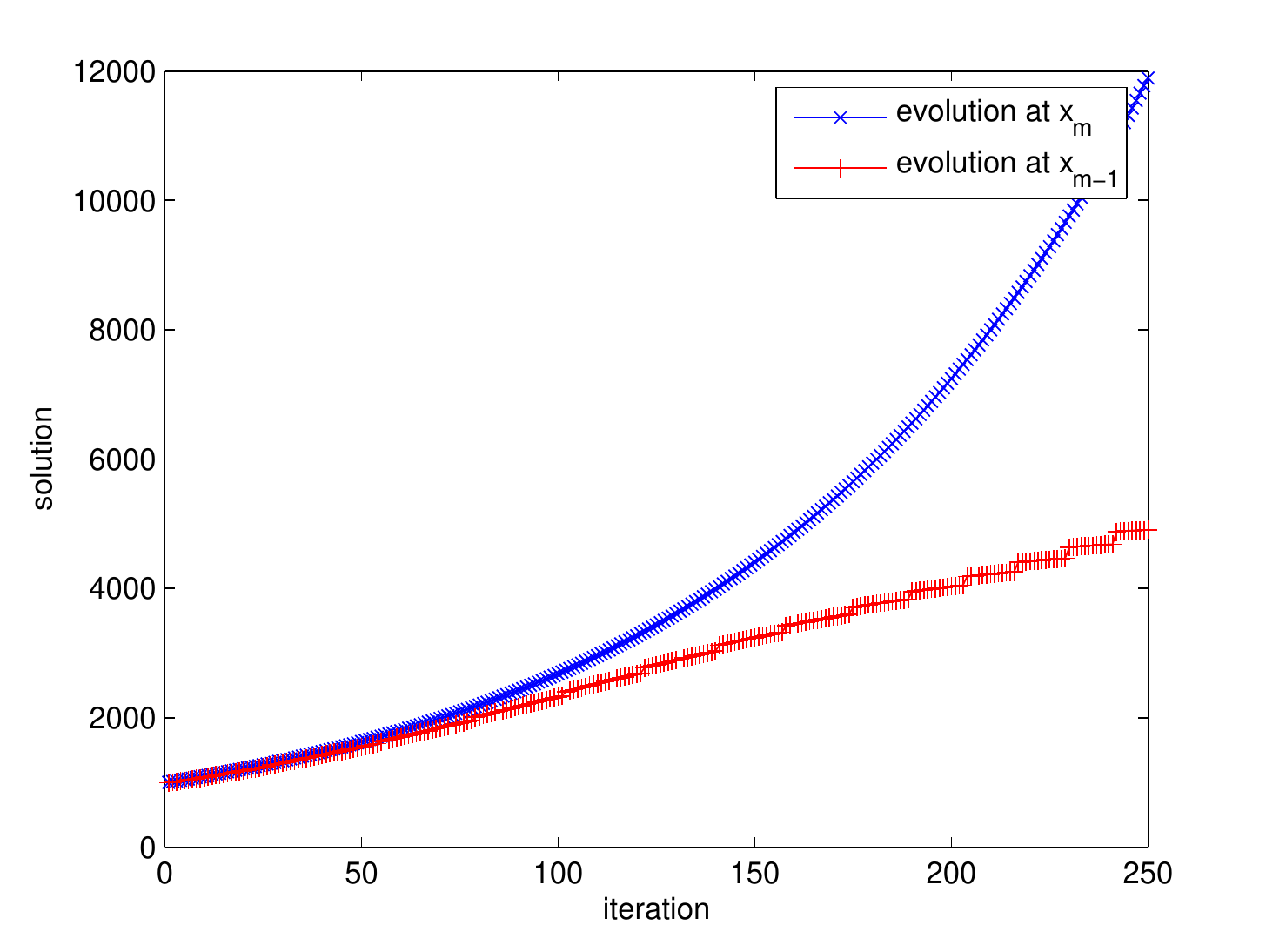}
			\caption{Evolution of the numerical solution at $x_{m}$ and $x_{m-1}$ for $p=4$ and $q=1.3$}
		\end{minipage}\hfill
		\begin{minipage}[b]{0.40\linewidth}
			\centering
			\includegraphics[width=3in,height=2in]{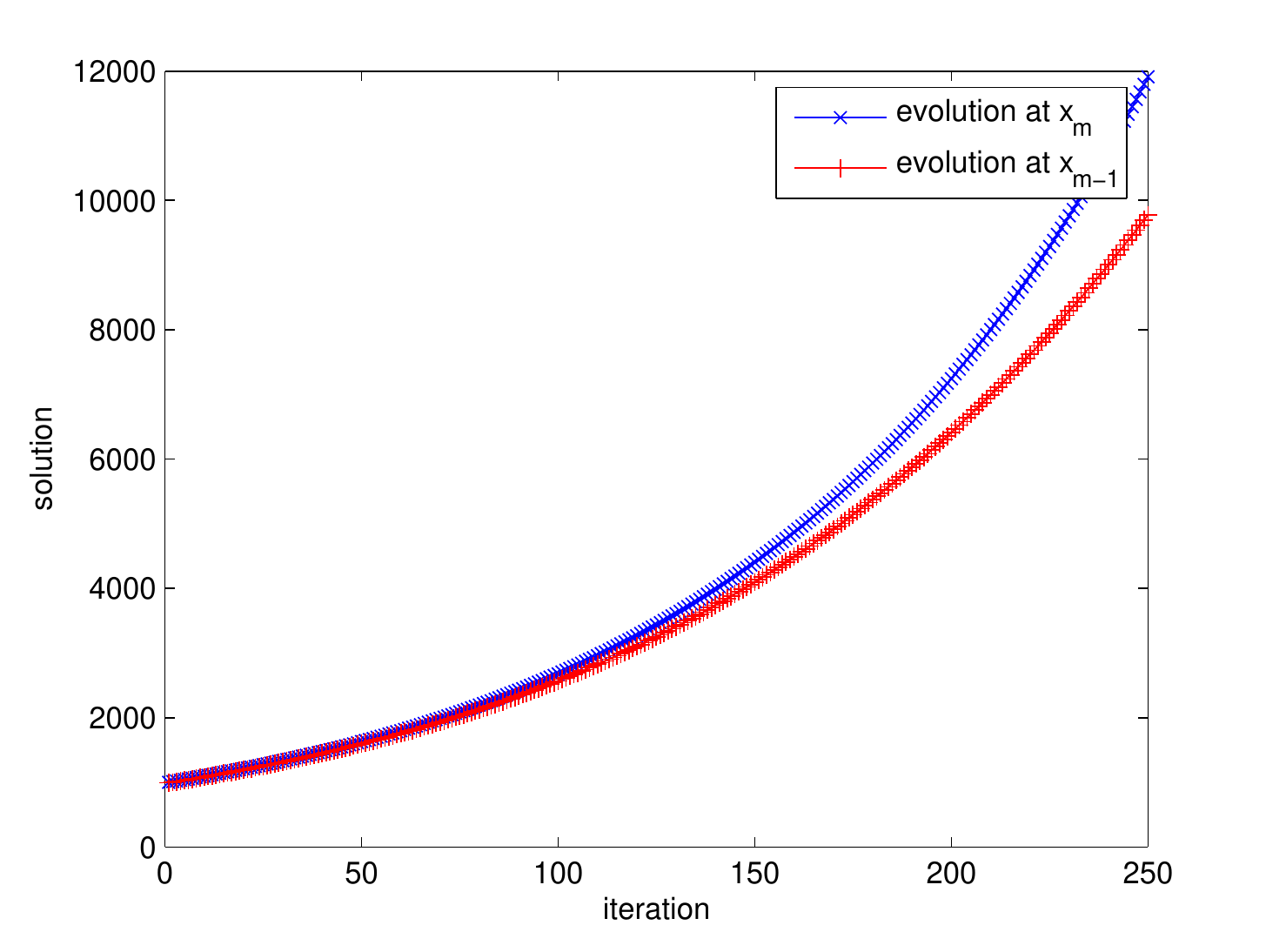}
			\caption{Evolution of the numerical solution at $x_{m}$ and $x_{m-1}$ for $p=2$ and $q=1.$}
		\end{minipage}
	\end{figure}
	\begin{figure}[H]
		\begin{minipage}[b]{0.40\linewidth}
			\centering
			\includegraphics[width=3in,height=2in]{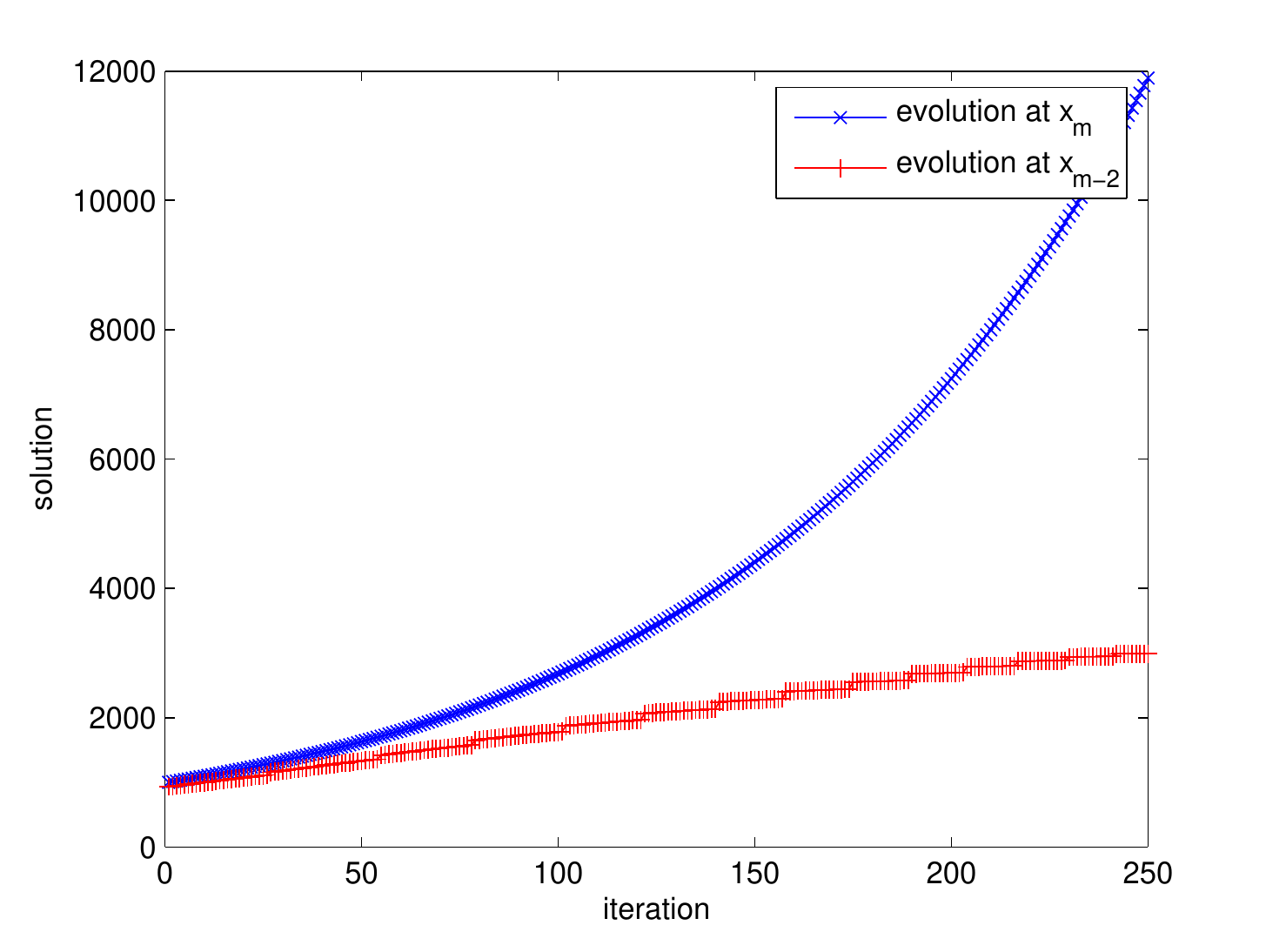}
			\caption{Evolution of the numerical solution at $x_{m}$ and $x_{m-2}$ for $p=2$ and $q=1.$}
		\end{minipage}\hfill
		\begin{minipage}[b]{0.40\linewidth}
			\centering
			\includegraphics[width=3in,height=2in]{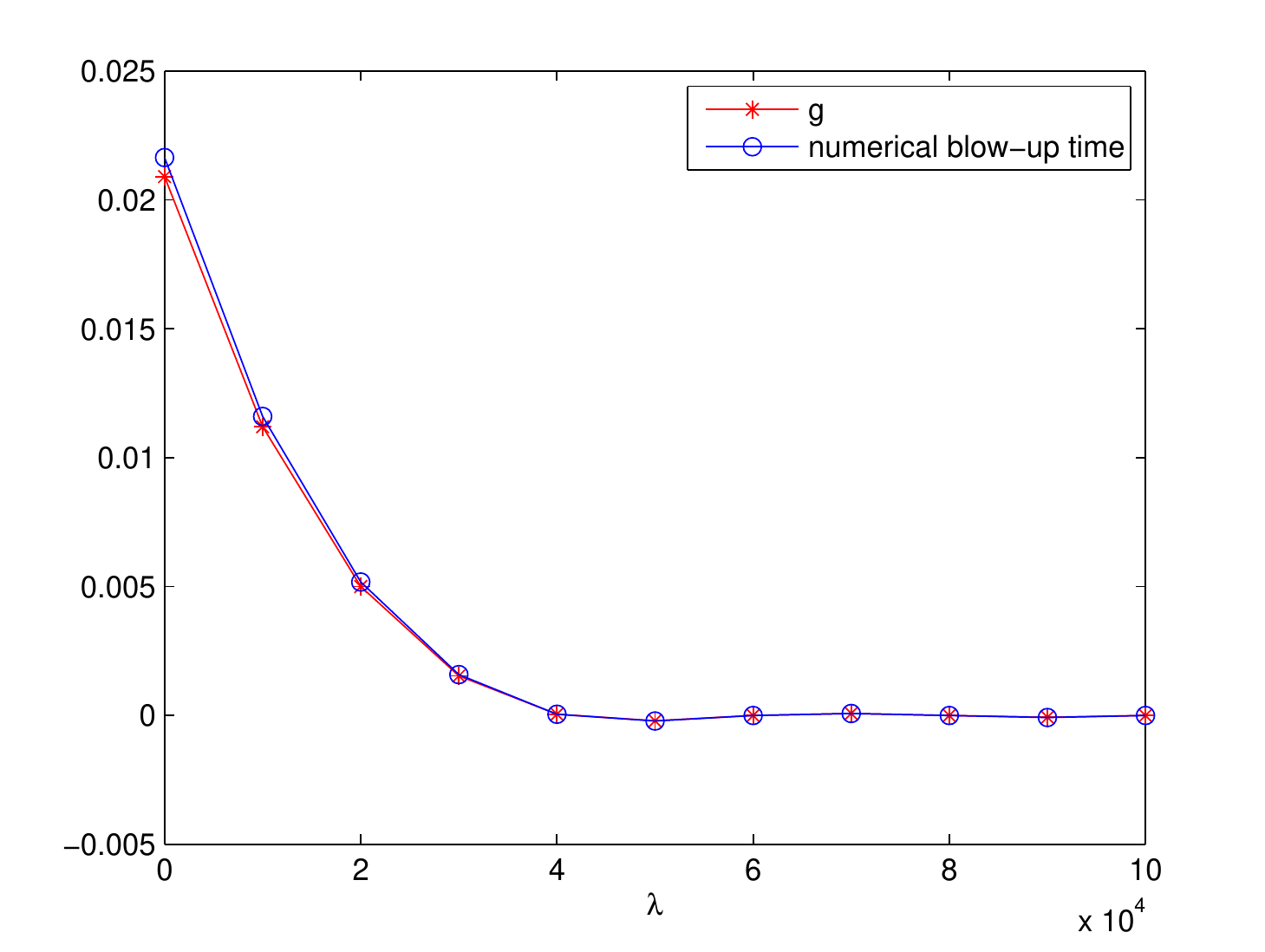}
			\caption{Graphics of $g(\lambda)$ and approximation of the numerical blow-up time for $p=3.$}
		\end{minipage}
	\end{figure}
	\section{Conclusion }
	\noindent We have showed that when $p=2$ and $q=1$, the finite difference solution blows up at more than one point and that when $p>2$ and $q<\dfrac{2(p-1)}{p}$, the only numerical blow up point is the mid-point $x=0.$
	This is an interesting phenomena in view of the fact that the solution of the corresponding PDE blows up only at one point $x=0$ for any $p>1$ and $1\leq q\leq \dfrac{2p}{p+1}.$
	Remark that for $1<p<2$ and $\dfrac{2(p-1)}{p}\leq q< \dfrac{2p}{p+1},$ we have no idea about the boundedness of $u_{m-1}^{n}$ and $u_{m-2}^{n}.$
	\vspace{-2cm}
	\begin{figure}[H]
		\centering
		\includegraphics[width=10cm,height=10cm]{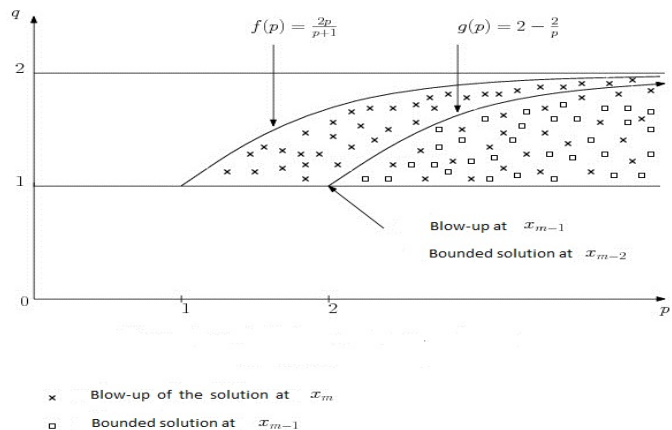}
		\vspace{-2cm}
		\caption{Graphics of the asymptotic behaviours of the solution near the blowing-up point.}
	\end{figure}

\end{document}